\newtheorem{theorem}{Theorem}
\newtheorem{corollary}[theorem]{Corollary}
\newtheorem{definition}[theorem]{Definition}
\newtheorem{lemma}[theorem]{Lemma}
\newtheorem{remark}[theorem]{Remark}
\def\eps{\varepsilon}
\def\N{\mathbb{N}}
\def\R{\mathbb{R}}
\def\P{\mathbb{P}}
\def\E{\mathbb{E}}
\def\T{\mathbb{T}}
\def\om{\omega}
\newcommand{\pf}{f_*\mu}
\begin{document}

\title{Hitting time statistics for observations of dynamical systems}
\author{J\'er\^ome Rousseau}

\begin{abstract}
In this paper we study the distribution of hitting and return times for observations of dynamical systems.
We apply this results to get an exponential law for the distribution of hitting and return times for rapidly mixing random dynamical systems. In particular, it allows us to obtain an exponential law for random expanding maps, random circle maps expanding in average and randomly perturbed dynamical systems.
\end{abstract}

\keywords{Poincar\'e recurrence, hitting times, exponential law, random dynamical systems, decay of correlations.}

\address{J\'er\^ome Rousseau, Departamento de Matem\'atica, Universidade Federal da Bahia\\
Av. Ademar de Barros s/n, 40170-110 Salvador, Brazil}
\email{jerome.rousseau@ufba.br} 
\urladdr{http://www.sd.mat.ufba.br/~jerome.rousseau}
\thanks{This work was partially supported by FAPESB and CNPq}
\maketitle

\section{Introduction}

When trying to study the reality, experimentalists must often approximate their system or use a simplified model to get a lower dimensional system more suitable to analyze.
In the same philosophy, when working on a high dimensional system, experimentalists generally are just interested in gauging different quantities (temperature, pressure, wind speed, wave height,...) or just know a measurement or observation of the system and tried to use theses observations to get informations on the whole system. 
Following these ideas, a Platonic formalism of dynamical system is given in \cite{MR2031277}. 
More precisely, they want to determine what information on an attractor of a high dimensional dynamical system one can learn by knowing only its image under a function taking values in a lower dimensional space, called an observation. 
To investigate the statistical properties of dynamical systems, the quantitative description of Poincar\'e recurrence behaviour plays an essential part and has been widely studied (see e.g. the review of Saussol \cite{MR2568049}).
Our aim in this article is to examine Poincar\'e recurrence for observations of dynamical systems.

The study of quantitative Poincar\'e recurrence for observations began with the work of Boshernitzan \cite{MR1231839} where it was shown that for a dynamical system $(X,T,\mu)$ and an observation $f$ from $X$ to a metric space $(Y,d)$, if the $\alpha$-dimensional Hausdorff measure is $\sigma$-finite on $Y$ then
\begin{equation}\liminf_{n\rightarrow\infty}n^{1/\alpha}d\left(f(x),f(T^nx)\right)<\infty\qquad\textrm{for $\mu$-almost every $x$}.\end{equation}
Following this work, the definition of return time for the observation was given in \cite{prfo} and the asymptotic behaviour of the return time was analyzed. More precisely, for a measurable function $f:X\rightarrow Y$, a point $x\in X$ and $r>0$, the return time for the observation is defined by:
\[\tau_r^f(x):=\inf\left\{k\in\N^*:\,f(T^kx)\in B(f(x),r)\right\},\]
and it was proved, under some aperiodicity condition, that if the system is rapidly mixing and if the observation $f$ is Lipschitz then $\tau_r^f(x)\underset{r\rightarrow0}{\sim} r^{-d}$ where $d$ is the pointwise dimension of the pushforward measure $f_*\mu$. In \cite{zbMATH06144604}, these results were extend to continuous time and applied to the geodesic flow.

In \cite{prfo}, a short example was given to remark that one can obtain results on the quantitative study of Poincar\'e recurrence for random dynamical systems using the study of recurrence for observations of dynamical systems. This idea was developed in \cite{MR2773129} where, to the best of our knowledge, for the first time annealed and quenched return time for random dynamical systems were defined and studied. It was proved that for super-polynomially mixing random dynamical systems presenting some kind of aperiodicity the random recurrence rates are equal to the pointwise dimensions of the stationary measure (recently the same type of results were obtained in \cite{ArbietoJS} for random hitting time). 

The distribution of the return time and hitting time statistics are another aspect of Poincar\'e recurrence which has been extensively considered for deterministic dynamical systems (one can see the reviews \cite{MR1776758,MR1835750,MR2568049,review-haydn}) and an exponential law was proved for numerous systems with chaotic behaviour (see e.g. \cite{MR1185337,MR1600931,MR1786719,MR2746177,MR2284890,MR1736991,MR1245828,MR1125886,zbMATH05548750}). Recently, a relation between hitting times statistics and extreme values theory has been made by Freitas, Freitas and Todd \cite{MR2749711,MR2639719}. 
One can also mention the work of Keller where spectral perturbation is used to obtain exponential hitting time distributions \cite{MR2903242,MR2535206}.

In the past few months, a couple of papers appeared on the laws of rare events for random dynamical systems. Indeed, in \cite{aytacFV} they link the distribution of hitting time and extreme value laws for randomly perturbed dynamical systems, study the convergence of rare events point process, and prove an exponential law for some randomly perturbed dynamical systems. An exponential distribution for hitting time is also proved in \cite{RSV} for rapidly mixing random subshift of finite type and random expanding maps. We emphasize that the exponential law is not proved with respect to the same measure in both case, the product measure of the skew-product is considered in \cite{aytacFV} while \cite{RSV} works with the sample measures.

Our aim in this paper is to continue the quantitive study of Poincar\'e recurrence for observations, and thus the quantitative study of recurrence of random dynamical systems, investigating the distribution of hitting and return time for the observation. An exponential law for the distribution of the hitting and return time for the observation is given in Section~\ref{sectionobservation} and proved in Section~\ref{sectionproofobservation}, for rapidly mixing dynamical systems, presenting some kind of aperiodicity. This allows us to obtain in Section~\ref{applications}, while the proofs are given in Section~\ref{proofrds}, an exponential law for super-polynomially mixing random dynamical systems and apply this result to random expanding maps, random circle maps expanding in average and randomly perturbed dynamical systems.

\section{Exponential law for observations of dynamical systems}\label{sectionobservation}

Let $(X,\mathcal{A},\mu,T)$ be a measure preserving system (m.p.s.) i.e. $\mathcal{A}$ is a $\sigma$-algebra, $\mu$ is a probability measure on $(X,\mathcal{A})$ and $\mu$ is invariant by $T$ (i.e $\mu(T^{-1}A)=\mu(A)$ for all $A\in\mathcal{A}$) where $T:X\rightarrow X$. We assume that $X$ is a metric space and $\mathcal{A}$ is its Borel $\sigma$-algebra. 

We introduce the hitting and return time for the observation and its associated recurrence rates.
\begin{definition}Let $f:X\rightarrow \R^N$ be a measurable function, called observation, and $A\subset \R^N$, we define for $x\in X$  the hitting time for the observation of $x$ in $A$:
\[\tau_A^f(x):=\inf\left\{k\in\N^*:\,f(T^kx)\in A\right\}.\]
When we study hitting time in ball, we define for $x_0\in X$  and $x\in X$ the hitting time for the observation:
\[\tau_r^f(x,x_0):=\inf\left\{k\in\N^*:\,f(T^kx)\in B\left(f(x_0),r\right)\right\}\]
where $B(y,r)$ denoted the ball centered in $y$ with radius $r$. Also, we define for $x\in X$ the return time for the observation:
\[\tau_r^f(x):=\inf\left\{k\in\N^*:\,f(T^kx)\in B\left(f(x),r\right)\right\}.\]
We then define the lower and upper recurrence rate for the observation:
\[\underline{R}^f(x):=\liminf_{r\rightarrow0}\frac{\log\tau_r^f(x)}{-\log r}\qquad\overline{R}^f(x):=\limsup_{r\rightarrow0}\frac{\log\tau_r^f(x)}{-\log r}.\]
\end{definition}
To obtain optimal results on the return and hitting times for the observation we need to assume that the system presents some kind of aperiodicity:
\begin{definition}
A m.p.s. $(X,\mathcal{A},\mu,T)$ is called $\mu$-almost aperiodic for the observation $f$ if
\[\mu\left(x\in X:  \exists n\in\N^*\textrm{ such that }f(T^nx)=f(x)\right)=0.\]
\end{definition}
We emphasize that this condition can be remove introducing non-instantaneous return times \cite{prfo}. 

Since we want to link the behaviour of the return time to the behaviour of the measure of shrinking sets, we remind the definition of \emph{the lower and upper pointwise or local dimension} of a Borel probability measure $\nu$ on Y at a point $y\in Y$
\[\underline{d}_\nu(y)=\underset{r\rightarrow0}{\underline\lim}\frac{\log\nu\left(B\left(y,r\right)\right)}{\log r}\qquad\textrm{and}\qquad\overline{d}_\nu(y)=\underset{r\rightarrow0}{\overline\lim}\frac{\log\nu\left(B\left(y,r\right)\right)}{\log r}.\]
 We also remind that the conditional measure is
\[\nu_A(B)=\frac{\nu(A\cap B)}{\nu(A)}\]
and the pushforward measure is $f_*\nu(.):=\nu(f^{-1}(.))$. 

In order to study the behaviour of the return time, we need a rapid mixing condition:
\begin{definition}\label{defdecay}
$(X,T,\mu)$ has a super-polynomial decay of correlations if, for all $\psi$ Lipschitz function from $X$ to $\R$, for all $\phi$ measurable bounded function from $X$ to $\R$ and for all $n\in\N^*$,  we have:
\[\left|\int_X\psi.\phi\circ T^n\, d\mu-\int_X \psi d\mu\int_X\phi d \mu\right|\leq\|\psi\|_{Lip}\|\phi\|_\infty\theta_n\]
with $\lim_{n\rightarrow\infty}\theta_n n^p=0$ for all $p>0$.
\end{definition}

In \cite{prfo}, the authors proved that the recurrence rates for the observation are linked to the local dimension of the pushforward measure:
\begin{theorem}\cite{prfo}\label{theoprfo}
Let $(X,\mathcal{A},\mu,T)$ be a m.p.s and $f:X\rightarrow \R^N$ a measurable observation such that the system is $\mu$-almost aperiodic for $f$. Then, for $\mu$-almost every $x\in X$
\[\underline{R}^f(x)\leq \underline{d}_{f_*\mu}(f(x)) \qquad \textrm{and}\qquad\overline{R}^f(x)\leq \overline{d}_{f_*\mu}(f(x)).\]
Moreover, if the system has a super-polynomial decay of correlations and $f$ is Lipschitz then
\[\underline{R}^f(x)= \underline{d}_{f_*\mu}(f(x)) \qquad \textrm{and}\qquad\overline{R}^f(x)= \overline{d}_{f_*\mu}(f(x))\]
for $\mu$-almost every $x\in X$ such that $\underline{d}_{f_*\mu}(f(x))>0$.
\end{theorem}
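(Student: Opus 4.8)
The plan is to prove the two assertions by separate arguments, writing $\nu:=f_*\mu$.

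\textbf{Upper bounds.} To obtain $\underline{R}^f(x)\le\underline{d}_\nu(f(x))$ and $\overline{R}^f(x)\le\overline{d}_\nu(f(x))$ I would compare $\tau_r^f(x)$ with a genuine first return time and then invoke Kac's lemma. Fix a geometric sequence of scales $r_k\downarrow0$, and for each $k$ use finitely many translates of the cubic grid of side $\asymp r_k$ chosen so that every $y\in\R^N$ lies well inside some cube $Q$ of one of these grids, i.e. $B(y,cr_k)\subset Q\subset B(y,r_k)$ with $c\in(0,1)$ fixed. If $f(x)\in Q$, then $T^nx\in f^{-1}(Q)$ forces $f(T^nx)\in B(f(x),r_k)$, so $\tau_{r_k}^f(x)$ is bounded by the first return time of $x$ to $f^{-1}(Q)$. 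Kac's lemma gives $\int_{f^{-1}(Q)}\tau_{f^{-1}(Q)}\,d\mu\le1$, so by Markov's inequality $\mu(\{x\in f^{-1}(Q):\tau_{f^{-1}(Q)}(x)>k^2/\nu(Q)\})\le k^{-2}\nu(Q)$; summing over the countably many cubes of all the grids and over $k$ gives a convergent series, so Borel--Cantelli yields, for $\mu$-a.e. $x$ and all large $k$, $\tau_{r_k}^f(x)\le k^2/\nu(B(f(x),cr_k))$. Since $r\mapsto\tau_r^f(x)$ is non-increasing, this interpolates to $\tau_r^f(x)\le C(\log1/r)^2/\nu(B(f(x),ar))$ for all small $r$ and a fixed $a\in(0,1)$; taking logarithms, dividing by $-\log r$ and substituting $s=ar$ (a bijection, which preserves both $\liminf$ and $\limsup$) gives the two inequalities. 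Here the aperiodicity hypothesis is used only to guarantee that $\tau_r^f(x)\to\infty$ as $r\to0$, so that the recurrence rates carry content.

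\textbf{Lower bounds: reduction.} Assume now super-polynomial decay and $f$ Lipschitz. It suffices to show that for every $d_0>0$, $\epsilon>0$ and $m,M\in\N$, for $\mu$-a.e. $x$ with $f(x)$ in the good set $G=\{y\in B(0,M):\nu(B(y,s))\le s^{d_0-\epsilon}\text{ for all }s\le1/m\}$ one has $\underline{R}^f(x)\ge d_0-4\epsilon$, together with the analogous statement controlling $\overline{R}^f$ for those $x$ with $\nu(B(f(x),\rho_j))\le\rho_j^{d_0-\epsilon}$ along infinitely many terms of a fixed sequence $\rho_j\downarrow0$; countable unions over $d_0\uparrow$, $\epsilon\downarrow$, $m,M\uparrow$ then give the theorem for a.e. $x$ with $\underline{d}_\nu(f(x))>0$. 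Fixing $\rho_j\downarrow0$ polynomially and $n_j=\rho_j^{-(d_0-3\epsilon)}$, I would establish $\sum_j\mu(E_j)<\infty$ for $E_j=\{x\in f^{-1}(G):\tau_{\rho_j}^f(x)\le n_j\}$; Borel--Cantelli together with the monotonicity/interpolation step above then yields $\tau_r^f(x)\ge r^{-(d_0-4\epsilon)}$ for $\mu$-a.e. $x\in f^{-1}(G)$ and all small $r$, whence the recurrence-rate lower bounds after reassembling the subsequence descriptions of $\underline{d}_\nu$ and $\overline{d}_\nu$.

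\textbf{The core estimate and the main obstacle.} To bound $\mu(E_j)$, cover $G$ by balls $B(z_i,\rho_j)$ with $z_i\in G$ and bounded overlap, so that their number is $\lesssim(M/\rho_j)^N$; since $f(x)\in B(z_i,\rho_j)$ and $f(T^kx)\in B(f(x),\rho_j)$ give $f(T^kx)\in B(z_i,2\rho_j)$, one has $E_j\subset\bigcup_i f^{-1}(B(z_i,\rho_j))\cap\bigcup_{k\le n_j}T^{-k}f^{-1}(B(z_i,2\rho_j))$. For iterates $k$ past a threshold $k_j$ I would use the decay of correlations with a Lipschitz $\psi_i$ satisfying $\mathbf{1}_{B(z_i,\rho_j)}\le\psi_i\le\mathbf{1}_{B(z_i,2\rho_j)}$ and $\|\psi_i\circ f\|_{\Lip}\lesssim\|f\|_{\Lip}/\rho_j$, obtaining $\mu(f^{-1}(B(z_i,\rho_j))\cap T^{-k}f^{-1}(B(z_i,2\rho_j)))\le\nu(B(z_i,\rho_j))\nu(B(z_i,2\rho_j))+(\|f\|_{\Lip}/\rho_j)\theta_k$. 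Summing over $i$ (using bounded overlap and $\nu(B(z_i,2\rho_j))\le(2\rho_j)^{d_0-\epsilon}$) and over $k_j<k\le n_j$ gives a contribution $\lesssim n_j\rho_j^{d_0-\epsilon}+\rho_j^{-N-1}\sum_{k>k_j}\theta_k\lesssim\rho_j^{2\epsilon}+\rho_j^{-N-1}k_j^{-(p-1)}$, the last estimate valid for every $p$ by super-polynomial decay; choosing $k_j=\rho_j^{-\beta}$ with $0<\beta<d_0-3\epsilon$ (so that $k_j\ll n_j$) and $p$ large makes this summable in $j$. The main obstacle is the remaining contribution of the small iterates $1\le k\le k_j$, where $\theta_k$ is not small and decay of correlations is useless: one must control $\mu(\{x\in f^{-1}(G):\tau_{\rho_j}^f(x)<k_j\})$ with $k_j\to\infty$. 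This is exactly where aperiodicity is essential --- it guarantees that for $\mu$-a.e. $x$ the observation orbit stays away from $f(x)$ for any fixed number of steps --- but turning this qualitative fact into a summable bound when the cut-off $k_j$ grows requires a coordinated choice of $\rho_j$, $k_j$ and the decay exponent $p$, together with the strict positivity $\underline{d}_\nu(f(x))>0$, which forces $\nu(B(f(x),\rho_j))$ to be a true power of $\rho_j$. I expect this balancing of scales, cut-offs and the mixing rate to be the technical heart of the argument.
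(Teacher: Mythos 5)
First, a remark on context: the paper does not prove this statement at all --- it is imported verbatim from \cite{prfo}, with only the observation that the proof there survives when the test function $\phi$ in Definition~\ref{defdecay} is merely bounded measurable rather than Lipschitz. So there is no in-paper proof to match; your proposal has to be judged on its own. Your first half (the upper bounds via nested grids, Kac's inequality, Markov and Borel--Cantelli, then interpolation by monotonicity of $r\mapsto\tau^f_r(x)$) is correct and complete in outline; this is the standard argument and it does not even need the aperiodicity hypothesis, since $\underline{R}^f(x)=0$ whenever $\tau^f_r(x)$ stays bounded.

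The genuine gap is exactly where you flag it: the short iterates $1\le k\le k_j$ in the lower bound. The problem is that the "coordinated balancing" you hope for cannot close it. On one side, making the long-return error $\rho_j^{-N-1}\sum_{k>k_j}\theta_k$ summable forces $k_j$ to grow at a definite minimum rate dictated by $\theta$ (with only super-polynomial decay and, say, $\rho_j=2^{-j}$, you need $\sum_{k>k_j}\theta_k\lesssim\rho_j^{N+2}$, hence $k_j\gtrsim\rho_j^{-\gamma}$ for some $\gamma>0$, or at best a prescribed sub-polynomial rate if $\theta$ decays faster). On the other side, aperiodicity only gives $\mu\left(\{x:\exists k\le K,\ d(f(T^kx),f(x))\le\rho\}\right)\to0$ as $\rho\to0$ \emph{for each fixed} $K$, with no rate whatsoever; so for no prescribed sequence $k_j\to\infty$ can you conclude from aperiodicity alone that $\mu(E_j^{\mathrm{short}})$ is summable, or even tends to zero. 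The two requirements on $k_j$ pull in opposite directions and no choice of $\rho_j$, $k_j$, $p$ reconciles them. Nor does the pointwise route work: for a.e.\ $x$ one has $\min_{k\le K}d(f(T^kx),f(x))>0$ for each fixed $K$, but since $k_j\to\infty$ this does not prevent $x$ from lying in infinitely many $E_j^{\mathrm{short}}$. If instead you choose $k_j=\rho_j^{-\beta}$ and absorb $\limsup_jE_j^{\mathrm{short}}$ into $\{\underline{R}^f\le\beta\}$, you only obtain the dichotomy "$\underline{R}^f=0$ or $\underline{R}^f\ge d_0-C\epsilon$", and excluding $\underline{R}^f=0$ is again the short-return problem. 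Controlling these short returns is precisely the technical heart of the proof in \cite{prfo}, and it requires an additional structural idea (of the same flavour as the conditional-measure/Lebesgue-density-point argument the present paper deploys in Lemma~\ref{lemshort}, which converts a qualitative a.e.\ statement into a quantitative bound on the measure of short-returners inside small balls) rather than a tuning of parameters. A secondary, lesser issue: your good sets $G$ with a fixed exponent $d_0$ naturally yield the $\liminf$ statement, but for $\overline{R}^f\ge\overline{d}_{f_*\mu}\circ f$ you need the uniform-in-$r$ comparison $\tau^f_r(x)\ge\nu(B(f(x),r))^{-(1-\epsilon)}$ against the measure of the ball at the same scale, not against a fixed power of $r$ along a subsequence; as written that part of the reduction does not go through.
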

One can observe that in \cite{prfo}, they assumed that the observables $\phi$, in Definition~\ref{defdecay}, are Lipschitz functions. A simple modification of their proof allows us to state their theorem with measurable bounded functions, a necessary assumption in the proof of our main theorem.

To obtain an information on the fluctuation of the return time we will need that the system fulfills the assumptions of the previous theorem and we will also need an hypothesis on the measure:
\begin{enumerate}[I)]
\item\label{hypcouronne} For $f_*\mu$-almost every $y\in \R^n$, there exist $a>0$ and $b\geq 0$ such that
\begin{equation}\label{eqcour}
f_*\mu\left(B\left(y,r\right)\backslash B\left(y,r-\rho\right)\right)\leq r^{-b}\rho^a
\end{equation}
for any $r>0$ sufficiently small and any $0<\rho<r$.
\end{enumerate}
In Section~\ref{applications} we will apply our results to random dynamical systems and give some examples where all the assumptions are fulfilled. Some examples of measure which fulfilled the hypothesis~\eqref{hypcouronne} are given in Lemma 44 of \cite{MR2568049}.

Our theorem on the fluctuations of the return time for the observations is the following:
\begin{theorem}\label{thprinc}
Let $(X,\mathcal{A},\mu,T)$ be a m.p.s with a super-polynomial decay of correlations and $f:X\rightarrow \R^N$ a Lipschitz observation such that the system is $\mu$-almost aperiodic for the observation $f$. If hypothesis \eqref{hypcouronne} is satisfied, then for every $t\geq0$ and $\mu$-almost every $x_0\in X$ such that $\underline{d}_{f_*\mu}(f(x_0))>0$:
\[\lim_{r\rightarrow0}\mu\left(x\in X, \tau_r^f(x,x_0)>\frac{t}{f_*\mu\left(B\left(f(x_0),r\right)\right)}\right)=e^{-t}\]
and
\[\lim_{r\rightarrow0}\mu_{f^{-1}B\left(f(x_0),r\right)}\left(x\in X, \tau_r^f(x,x_0)>\frac{t}{f_*\mu\left(B\left(f(x_0),r\right)\right)}\right)=e^{-t}.\]
\end{theorem}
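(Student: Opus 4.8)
The plan is to reduce the statement to a hitting-time result for the \emph{target sets} $A_r := f^{-1}\bigl(B(f(x_0),r)\bigr)$ in the base system $(X,\mathcal A,\mu,T)$, and then invoke a general Galves--Schmitt / Abadi-type criterion: if the measures $\mu(A_r)\to 0$, if the first hitting time into $A_r$ (suitably rescaled by $\mu(A_r)$) has no mass escaping to infinity, and if one controls short returns (the "clustering" or "periodicity" term), then $\mu\bigl(\tau_{A_r}>t/\mu(A_r)\bigr)\to e^{-t}$. Here $\mu(A_r)=f_*\mu(B(f(x_0),r))$, so $\tau^f_r(\cdot,x_0)=\tau_{A_r}$ and the two displays in the theorem are exactly the statements for $\mu$ and for the conditional measure $\mu_{A_r}$. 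The first step is therefore to set up this correspondence carefully and to record that $\mu(A_r)>0$ for small $r$ and $\mu(A_r)\to 0$ as $r\to0$, which follows from $\underline d_{f_*\mu}(f(x_0))>0$ for $f_*\mu$-a.e.\ (hence $\mu$-a.e.) $x_0$.

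The core estimate is an approximate memorylessness relation of the form
\[
\bigl|\mu\bigl(\tau_{A_r}>m+n\bigr)-\mu\bigl(\tau_{A_r}>m\bigr)\mu\bigl(\tau_{A_r}>n\bigr)\bigr|\le \text{error}(r,g),
\]
where one splits the event $\{\tau_{A_r}>m+n\}$ at an intermediate "gap" of length $g$: write $\{\tau_{A_r}>m+g+n\}\supseteq\{\tau_{A_r}>m\}\cap T^{-(m+g)}\{\tau_{A_r}>n\}$ minus the orbits that hit $A_r$ during the gap. To pass from the indicator of $\{\tau_{A_r}>n\}$ to something usable in Definition~\ref{defdecay} one replaces the set $A_r=f^{-1}B(f(x_0),r)$ by a Lipschitz approximation: since $f$ is Lipschitz, the function $\psi(x)=\max\bigl(0,1-\rho^{-1}\,\mathrm{dist}(f(x),B(f(x_0),r-\rho))\bigr)$ is Lipschitz with $\|\psi\|_{\Lip}\lesssim \rho^{-1}$, sandwiches the indicators of $B(f(x_0),r-\rho)$ and $B(f(x_0),r)$ pulled back by $f$, and the error committed is controlled by $\mu\bigl(f^{-1}(B(f(x_0),r)\setminus B(f(x_0),r-\rho))\bigr)=f_*\mu\bigl(B(f(x_0),r)\setminus B(f(x_0),r-\rho)\bigr)\le r^{-b}\rho^a$ by hypothesis~\eqref{hypcouronne}. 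Feeding $\psi$ and $\phi=\mathbf 1_{\{\tau_{A_r}>n\}}$ (bounded, measurable) into the super-polynomial decay of correlations gives an error $\|\psi\|_{\Lip}\theta_g\lesssim \rho^{-1}\theta_g$; choosing $\rho$ a small power of $r$ and $g$ a slowly growing function of $1/r$ (e.g.\ a power of $\log(1/r)$ times $1/\mu(A_r)$, or simply a fixed power of $1/r$ smaller than the return-time scale) makes both the annulus error and the decay error negligible relative to $\mu(A_r)$, using that $\theta_g$ beats every polynomial.

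The remaining ingredient is to bound the \emph{short-return} contribution $\sum_{k\le g}\mu(A_r\cap T^{-k}A_r)$, i.e.\ the orbits that re-enter $A_r$ within the gap; this is exactly where $\mu$-almost aperiodicity for the observation and Theorem~\ref{theoprfo} enter. By Theorem~\ref{theoprfo}, for $\mu$-a.e.\ $x$ the return time $\tau^f_r(x)$ is of order $r^{-d}$ with $d=\underline d_{f_*\mu}(f(x))>0$; consequently, for $\mu$-a.e.\ $x_0$ and $r$ small, almost every point of $A_r$ has $\tau^f_r\gg g$, so $\mu\bigl(\{x\in A_r:\tau_{A_r}(x)\le g\}\bigr)=o(\mu(A_r))$, which is the needed short-return bound (this is the standard device of Saussol--Hydn--Vaienti for turning recurrence-rate information into a non-clustering statement). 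Iterating the approximate memorylessness relation with $n$ steps of size $\sim t/\mu(A_r)$ and accumulating $n\approx 1/\mu(A_r)$ errors, each $o(\mu(A_r))$, yields $\mu\bigl(\tau_{A_r}>t/\mu(A_r)\bigr)\to(e^{-1})^t=e^{-t}$ for a fixed $t$, along any sequence $r\to0$; a monotonicity argument in $t$ upgrades this to all $t\ge0$ simultaneously, and the a.e.\ choice of $x_0$ is the intersection of the full-measure sets coming from aperiodicity, from $\underline d_{f_*\mu}(f(x_0))>0$, from hypothesis~\eqref{hypcouronne}, and from the a.e.\ validity of Theorem~\ref{theoprfo}. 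Finally the conditional statement follows from the unconditional one by the identity $\mu_{A_r}(\tau_{A_r}>s)=\mu(A_r\cap\{\tau_{A_r}>s\})/\mu(A_r)$ together with $\mu(\{\tau_{A_r}>s\}\setminus A_r\cap\{\tau_{A_r}>s\})=\mu(A_r\cap T^{-1}\{\tau_{A_r}>s-1\})$ and another application of the same decay-of-correlations/annulus estimate; alternatively one uses Kac's formula to relate the two limits directly.

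I expect the main obstacle to be the bookkeeping of scales: one must choose the annulus width $\rho=\rho(r)$, the gap length $g=g(r)$, and verify simultaneously that (i) $r^{-b}\rho^a=o(\mu(A_r))$, (ii) $\rho^{-1}\theta_g=o(\mu(A_r))$, (iii) $g\cdot\sup\{\text{short-return density}\}=o(1)$, and (iv) the number $\sim1/\mu(A_r)$ of accumulated errors times the per-step error is still $o(1)$. The tension is that $\mu(A_r)=f_*\mu(B(f(x_0),r))$ is only pinned down up to subexponential factors (we only know $\underline d_{f_*\mu}(f(x_0))>0$, not an exact power law), so all estimates must be phrased in terms of $\mu(A_r)$ itself rather than $r^{d}$; super-polynomial decay is exactly strong enough to absorb these subexponential fudge factors, and hypothesis~\eqref{hypcouronne} is exactly what converts "ball minus slightly smaller ball" into a quantitatively small error. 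Everything else is a routine adaptation of the deterministic hitting-time machinery to the pulled-back sets $f^{-1}B(f(x_0),r)$.
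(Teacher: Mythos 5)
Your overall architecture is the same as the paper's: reduce to the hitting time of the pulled-back sets $A_r=f^{-1}B(f(x_0),r)$, compare $\mu(\tau_{A_r}>n)$ with $(1-\mu(A_r))^n$ via a Galves--Schmitt type iteration controlled by a quantity $\delta(A_r)$, estimate the correlation term by replacing $\mathbf 1_{A_r}$ with the Lipschitz function $\max\bigl(0,1-\rho^{-1}d(f(x),B(f(x_0),r-\rho))\bigr)$ (paying an annulus error bounded by hypothesis~\eqref{hypcouronne} and a decay error $\rho^{-1}\theta_g$), and kill the remaining terms by a short-return estimate. All of that matches the paper, and your scale bookkeeping (choose $g$ a power of $1/r$ below the return-time scale, $\rho$ tied to $\theta_g$, use $\overline d_{f_*\mu}\le N$ a.e.\ to lower-bound $f_*\mu(B(f(x_0),r))$ polynomially) is exactly what is done.

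There is, however, a genuine gap at the single hardest point: the \emph{conditional} short-return bound. You write that, by Theorem~\ref{theoprfo}, for $\mu$-a.e.\ $x$ the return time $\tau^f_r(x)$ exceeds $r^{-d}$ eventually, and ``consequently'' $\mu\bigl(A_r\cap\{\tau_{A_r}\le g\}\bigr)=o(\mu(A_r))$. That implication is not valid as stated: Theorem~\ref{theoprfo} is an almost-everywhere statement, so for each $r$ the exceptional set $E_r$ of points with a short return merely has $\mu(E_r)\to0$, and nothing prevents $E_r$ from concentrating inside the shrinking set $A_r$, whose measure also tends to $0$; an $o(1)$ bound does not yield $o(\mu(A_r))$. (The \emph{unconditional} bound $\mu(\tau_{A_r}\le g)\le g\,\mu(A_r)\to0$ is indeed trivial, but that is the easy half.) The paper devotes Lemma~\ref{lemshort} to exactly this: it first converts the hitting time of $x\in A_r$ into $B(f(x_0),r)$ into the return time of $x$ into the doubled ball $B(f(x),2r)$, then disintegrates $\mu$ over the fibers of $f$ via $\E_\mu(\cdot\mid f=y)$, defines the set $D_\varepsilon(r_0)$ of fibers on which the bad set has conditional mass at most $\varepsilon$, and finally requires $f(x_0)$ to be an $f_*\mu$-Lebesgue density point of $D_\varepsilon(r_0)$ so that $f_*\mu(B(f(x_0),r)\cap D_\varepsilon(r_0)^c)\le\varepsilon f_*\mu(B(f(x_0),r))$. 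Without this density-point/disintegration argument (or an equivalent device) your proof does not close; everything else in your proposal is a correct rendering of the paper's strategy.
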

One can observe that, for rapidly mixing dynamical system, we can applied this theorem to the observation $f=id$ and we obtain some of the results cited in the introduction and, in particular, a generalization of Theorem 40 of \cite{MR2568049}:

 \begin{corollary}
Let $X\subset \R^N$ and let $(X,\mathcal{A},\mu,T)$ be a m.p.s with a super-polynomial decay of correlations. 
Let us suppose that for $\mu$-almost every $y\in X$, there exist $a>0$ and $b\geq 0$ such that $\mu\left(B\left(y,r\right)\backslash B\left(y,r-\rho\right)\right)\leq r^{-b}\rho^a$
for any $r>0$ sufficiently small and any $0<\rho<r$. 

Then for every $t\geq0$ and for $\mu$-almost every $x_0\in X$ such that $\underline{d}_{\mu}(x_0)>0$:
\[\lim_{r\rightarrow0}\mu\left(x\in X, \tau_r(x,x_0)>\frac{t}{\mu\left(B\left(x_0,r\right)\right)}\right)=e^{-t}\]
and
\[\lim_{r\rightarrow0}\mu_{B\left(x_0,r\right)}\left(x\in X, \tau_r(x,x_0)>\frac{t}{\mu\left(B\left(x_0,r\right)\right)}\right)=e^{-t}\]
where $\tau_r(x,x_0):=\inf\left\{k\in\N^*:\,T^k x\in B\left(x_0,r\right)\right\}$.
\end{corollary}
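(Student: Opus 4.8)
The plan is to deduce the Corollary from Theorem~\ref{thprinc} by taking the observation $f=\mathrm{id}:X\to\R^N$. First I would check that all hypotheses of Theorem~\ref{thprinc} transfer. The identity map is obviously Lipschitz (with $\|\mathrm{id}\|_{\mathrm{Lip}}=1$), so the only serious points are the almost-aperiodicity condition and hypothesis~\eqref{hypcouronne}. For $f=\mathrm{id}$, the event $\{x\in X:\exists n\in\N^*,\ f(T^nx)=f(x)\}$ is exactly the set of periodic points of $T$; since $(X,\mathcal{A},\mu,T)$ has a super-polynomial decay of correlations, it is mixing, hence ergodic and aperiodic in the sense that the set of periodic points has zero measure (if a period-$p$ orbit carried positive mass, $T^p$ would fix a positive-measure set, contradicting mixing of $T$ unless $\mu$ is a point mass, which is incompatible with decay of correlations and $\underline d_\mu(x_0)>0$). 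This gives $\mu$-almost aperiodicity for the observation $\mathrm{id}$. Next, $f_*\mu=\mathrm{id}_*\mu=\mu$, so the hypothesis assumed in the Corollary — namely $\mu\bigl(B(y,r)\setminus B(y,r-\rho)\bigr)\le r^{-b}\rho^a$ for $\mu$-a.e.\ $y$, for small $r$ and $0<\rho<r$ — is literally hypothesis~\eqref{hypcouronne} for $f_*\mu$. Likewise $\underline d_{f_*\mu}(f(x_0))=\underline d_\mu(x_0)$, so the condition $\underline d_\mu(x_0)>0$ is exactly the condition on $x_0$ required by Theorem~\ref{thprinc}.

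With these identifications in place, I would simply observe that for $f=\mathrm{id}$ the hitting time for the observation reduces to the classical hitting time:
\[
\tau_r^{\,\mathrm{id}}(x,x_0)=\inf\{k\in\N^*:\ T^kx\in B(f(x_0),r)\}=\inf\{k\in\N^*:\ T^kx\in B(x_0,r)\}=\tau_r(x,x_0),
\]
and the set $f^{-1}B(f(x_0),r)=\mathrm{id}^{-1}B(x_0,r)=B(x_0,r)$, so $\mu_{f^{-1}B(f(x_0),r)}=\mu_{B(x_0,r)}$. Substituting these equalities into the two limit statements of Theorem~\ref{thprinc} yields, for every $t\ge 0$ and $\mu$-a.e.\ $x_0$ with $\underline d_\mu(x_0)>0$,
\[
\lim_{r\to0}\mu\!\left(x\in X:\ \tau_r(x,x_0)>\frac{t}{\mu(B(x_0,r))}\right)=e^{-t}
\quad\text{and}\quad
\lim_{r\to0}\mu_{B(x_0,r)}\!\left(x\in X:\ \tau_r(x,x_0)>\frac{t}{\mu(B(x_0,r))}\right)=e^{-t},
\]
which is precisely the assertion of the Corollary.

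The only genuinely non-trivial step is the verification of $\mu$-almost aperiodicity for $f=\mathrm{id}$, i.e.\ that mixing (or even just the stated decay of correlations together with a non-atomic hypothesis implied by $\underline d_\mu(x_0)>0$) forces the periodic set to be $\mu$-null; everything else is a direct translation. One should be slightly careful in phrasing this: rather than invoking aperiodicity of $T$ globally, it suffices to note that the periodic set $\bigcup_{p\ge1}\{x:T^px=x\}$ is $T$-invariant and, by ergodicity, has measure $0$ or $1$; if it had measure $1$ then for some fixed $p$ the set $\mathrm{Fix}(T^p)$ would have positive measure, and since $\mu$ restricted to that set would be supported on finitely many points in any region of positive local dimension, this contradicts $\underline d_\mu(x_0)>0$ holding on a positive-measure set. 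Hence the periodic set is $\mu$-null and Theorem~\ref{thprinc} applies.
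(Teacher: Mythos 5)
Your proposal is correct and follows exactly the route the paper takes: apply Theorem~\ref{thprinc} with the observation $f=\mathrm{id}$, under which $f_*\mu=\mu$, $\tau_r^f=\tau_r$ and $f^{-1}B(f(x_0),r)=B(x_0,r)$, so the two limits are immediate. Your verification that the periodic set is $\mu$-null (or that the statement is vacuous when $\mu$ is a Dirac mass) is precisely the content of the paper's remark that ``we do not need the assumption on the $\mu$-almost aperiodicity since the system is mixing,'' which the paper leaves unproved; your argument fills that in correctly.
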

One can remark that, in this corollary, we do not need the assumption on the $\mu$-almost aperiodicity since the system is mixing.

\section{Hitting time statistics for random dynamical systems}\label{applications}

In \cite{prfo}, it was observed that the study of recurrence for observations of dynamical systems allows to 
study recurrence for random dynamical systems. Following this remark, random return 
times for dynamical systems were defined in \cite{MR2773129} and the random recurrence rates were linked to the local dimension of the 
stationary measure. Recently, a few works on this theme are emerging, indeed, random hitting time indicators are 
studied in \cite{ArbietoJS}, \cite{aytacFV} linked the distribution of hitting time for randomly perturbed dynamical 
systems and extreme value laws, and an exponential distribution for hitting time is proved in \cite{RSV} for random 
subshift of finite type.

In this section, we will use our results for observations of dynamical systems to prove an exponential law 
for random dynamical systems presenting some rapidly mixing conditions and apply this result to random expanding maps, random circle maps expanding in average and randomly perturbed dynamical systems.

\subsection{Exponential law for random dynamical systems}

Let $\Omega$ be a metric space and $\mathcal{B}(\Omega)$ its Borelian $\sigma$-algebra. Let $\vartheta:\Omega \to \Omega$ be a measurable transformation on $\Omega$
preserving some probability measure $\P$.  Given a compact metric space $X$ and a family $\mathcal T=(T_\om)_{\om\in\Omega}$ of transformations $T_\om:X \to X$, we say that it defines a \emph{random dynamical system} over $(\Omega,\mathcal{B}(\Omega),\mathbb{P},\vartheta)$ via $T_\om^n=T_{\vartheta^{n-1}(\om)}\circ \dots \circ T_{\vartheta(\om)}\circ T_\om$
for every $n\ge 1$ and $T_\om^0=Id$.

The dynamics of the random dynamical systems generated by $\mathcal T$ over$(\Omega,\mathcal{B}(\Omega),\mathbb{P},\vartheta)$ is given by the skew-product: 
\begin{eqnarray*}
S:   \Omega\times X &  \to &  \Omega \times X \\
      (\om, x) & \to & (\vartheta(\om),T_\om(x)).
\end{eqnarray*}
A probability measure $\mu$ is \emph{invariant} by the random dynamical system if it is $S$-invariant and
$\pi_* \mu=\P$, where $\pi:\Omega\times X \to \Omega$ is the canonical projection. Henceforth, we denote by $\nu$ the marginal of $\mu$ on $X$, i.e. $\nu=\int \mu_\omega \, d\P$ where $(\mu_\omega)_\omega$ denote the decomposition of $\mu$ on $X$, that is, $d\mu(\omega,x)=d\mu_\omega(x)d\P(\omega)$. 

When the skew product invariant measure is a product measure $\mu=\P\otimes\nu$, we will say that $\nu$ is a stationary measure for the random dynamical system. One can observe that it includes the case where the maps $T_{\omega}$ are chosen independently and with the same distribution. 

Now we shall define return and hitting times for random dynamical systems. For a fixed $\omega\in \Omega$ the quenched random hitting time in a measurable subset $A\subset X$ of the random orbit starting from a point $x\in X$ is:
\[\tau_A^{\omega}(x)=\inf\{n>0\,:\, T^n_{\omega}x\in A\}.\]

From now on we assume that $X\subset \R^N$ for some $N\in\N$. For $\omega\in\Omega$ and $x_0\in X$, we are interested in the behavior as $r\rightarrow0$ of the quenched random hitting time of a point $x\in X$ into the open ball $B(x_0,r)$ defined by
\[\tau_{r}^{\omega}(x,x_0):=\inf\left\{n>0: T^n_{\omega}x\in B(x_0,r)\right\}\]
and the quenched random return time of a point $x\in X$ into the open ball $B(x,r)$ defined by
\[\tau_{r}^{\omega}(x):=\inf\left\{n>0: T^n_{\omega}x\in B(x,r)\right\}.\]
As previously, we will deal with systems presenting some kind of aperiodicity:
\begin{definition}
The random dynamical system $\mathcal{T}$ on $X$ over $(\Omega,\mathcal{B}(\Omega),\mathbb{P},\vartheta)$ with an invariant measure $\mu$ is called random-aperiodic if 
\[\mu\left((\omega,x)\in\Omega\times X:\exists n\in \mathbb{N},\,T^n_{\omega}x=x\right)=0.\]
\end{definition}
More details on such systems can be found in the Section 2.3 of \cite{prfo} and the Section 4 and 5 of \cite{MR2773129}. We also need an assumption on the measure and an assumption on the decay of correlations, which will be weaker than assuming super-polynomial decay of correlations for the skew-product:
\begin{enumerate}[a)]
\item For $\nu$-almost every $x\in X$, there exist $a>0$ and $b\geq 0$ such that
\[\nu\left(B\left(x,r\right)\backslash B\left(x,r-\rho\right)\right)\leq r^{-b}\rho^a\]
for any $r>0$ sufficiently small and any $0<\rho<r$. \label{hypcourrds}
\item For all $n\in\mathbb{N}^{*}$, $\psi$ Lipschitz observables from $X$ to $\mathbb{R}$ and $\varphi$ measurable bounded from $\Omega\times X$ to $\mathbb{R}$
$$
\vert\int_{\Omega\times X}\psi(x)\varphi(S^n(\omega,x))d\mu-\int_X\psi d\nu \int_{\Omega\times X} \varphi d\mu\vert\leq \|\psi\|_{Lip}.\| \varphi\|_\infty .\theta_n
$$
with $\lim_{n\rightarrow\infty}\theta_nn^p=0$ for any $p>0$. \label{decayskewrds}
\end{enumerate}
\begin{theorem}\label{theorandomhit}
Let $\mathcal{T}$ be a random dynamical system on $X$ over $(\Omega,\mathcal{B}(\Omega),\mathbb{P},\vartheta)$ with an invariant measure $\mu$. If the random dynamical system is random-aperiodic and satisfied hypothesis \eqref{hypcourrds} and \eqref{decayskewrds} then for every $t\geq0$ and for $\nu$-almost every $x_0\in X$ such that $\underline{d}_{\nu}(x_0)>0$:
\[\lim_{r\rightarrow0}\mu\left((\omega,x)\in \Omega\times X, \tau_{r}^\omega(x,x_0)>\frac{t}{\nu\left(B\left(x_0,r\right)\right)}\right)=e^{-t}\]
and
\[\lim_{r\rightarrow0}\mu_{\Omega\times B\left(x_0,r\right)}\left((\omega,x)\in \Omega\times X, \tau_{r}^\omega(x,x_0)>\frac{t}{\nu\left(B\left(x_0,r\right)\right)}\right)=e^{-t}.\]
\end{theorem}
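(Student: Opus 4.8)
\emph{Proof proposal.} The plan is to derive Theorem~\ref{theorandomhit} from Theorem~\ref{thprinc} by realising a quenched random hitting time as a hitting time for a well chosen observation of the skew-product. I would apply the framework of Section~\ref{sectionobservation} to the m.p.s.\ $(\Omega\times X,\mu,S)$ and to the observation $f:\Omega\times X\to\R^N$ given by the projection $f(\om,x)=x$. This $f$ is Lipschitz (indeed $1$-Lipschitz for the product metric), $f\circ S^k(\om,x)=T^k_\om x$ for every $k$, and $f_*\mu$ coincides with the $X$-marginal $\nu$ of $\mu$. Hence, for a reference point $(\om_0,x_0)$ one has $f(\om_0,x_0)=x_0$, $f^{-1}(B(x_0,r))=\Omega\times B(x_0,r)$, and
\[\tau_r^f\bigl((\om,x),(\om_0,x_0)\bigr)=\inf\{n\in\N^*:\,T^n_\om x\in B(x_0,r)\}=\tau_r^\om(x,x_0).\]
Under this dictionary the hypotheses match up exactly: the $\mu$-almost aperiodicity of $S$ for $f$ reads $\mu\bigl((\om,x):\exists n,\ T^n_\om x=x\bigr)=0$, which is random-aperiodicity; hypothesis \eqref{hypcouronne} for $f_*\mu=\nu$ is hypothesis \eqref{hypcourrds}; and $\underline d_{f_*\mu}(f(\om_0,x_0))=\underline d_\nu(x_0)$.

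The one point where Theorem~\ref{thprinc} cannot be quoted as a black box, and what I expect to be the main obstacle, is the mixing assumption: we are \emph{not} assuming that $S$ has a super-polynomial decay of correlations on all of $\Omega\times X$, only the weaker estimate \eqref{decayskewrds} in which the Lipschitz test function depends on the $X$-coordinate alone. I would therefore revisit the proof of Theorem~\ref{thprinc} in Section~\ref{sectionproofobservation} and check that the decay of correlations is used there \emph{only} with test functions of the form $g\circ f$, where $g:\R^N\to\R$ is a Lipschitz function approximating from above or from below the indicator of a ball $B(f(x_0),r)$; for our $f$ such a function is $(\om,x)\mapsto g(x)$, which is Lipschitz on $X$ and independent of $\om$, so that every correlation bound invoked is precisely of the type supplied by \eqref{decayskewrds} (with $\|g\circ f\|_{\Lip}$ understood as the Lipschitz constant of $g$ on $X$), the remaining factor being a bounded measurable function on $\Omega\times X$ of the form $\mathbf 1_{\{\tau_r^f>m\}}$. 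With this observation the proof of Theorem~\ref{thprinc} carries over verbatim to the skew-product equipped with hypothesis \eqref{decayskewrds}.

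Granting this, Theorem~\ref{thprinc} yields, for every $t\geq0$ and for $\mu$-almost every $(\om_0,x_0)$ with $\underline d_\nu(x_0)>0$, both $\lim_{r\to0}\mu\bigl((\om,x):\tau_r^\om(x,x_0)>t/\nu(B(x_0,r))\bigr)=e^{-t}$ and the same limit with $\mu$ replaced by $\mu_{f^{-1}B(f(\om_0,x_0),r)}=\mu_{\Omega\times B(x_0,r)}$. The final step is to pass from ``$\mu$-almost every $(\om_0,x_0)$'' to ``$\nu$-almost every $x_0$'': the displayed conclusion depends on $x_0$ alone, so if $\mathcal N\subset\Omega\times X$ denotes the $\mu$-null exceptional set produced by Theorem~\ref{thprinc} and $B_0$ is the set of $x_0$ with $\underline d_\nu(x_0)>0$ for which the conclusion fails, then $\Omega\times B_0\subset\mathcal N$, whence, using $\nu=\int\mu_\om\,d\P$,
\[\nu(B_0)=\int_\Omega\mu_\om(B_0)\,d\P(\om)=\mu(\Omega\times B_0)\leq\mu(\mathcal N)=0,\]
which is exactly the assertion of Theorem~\ref{theorandomhit}.
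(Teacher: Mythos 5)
Your proposal is correct and follows essentially the same route as the paper: apply Theorem~\ref{thprinc} to the skew-product $(\Omega\times X,\mu,S)$ with the projection observation $f(\om,x)=x$, identify $f_*\mu=\nu$, the hitting times, the aperiodicity conditions and the local dimensions, and observe that the Lipschitz test function built in the proof of Theorem~\ref{thprinc} factors through $f$, so that the weaker correlation hypothesis~\eqref{decayskewrds} suffices for the bound~\eqref{eqdecay}. Your explicit final step converting the ``$\mu$-a.e.\ $(\om_0,x_0)$'' conclusion into a ``$\nu$-a.e.\ $x_0$'' statement via $\nu(B_0)=\mu(\Omega\times B_0)$ is a small but welcome detail that the paper leaves implicit.
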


The basic idea to prove this theorem, which was already used in \cite{prfo,MR2773129}, is to applied Theorem~\ref{thprinc} 
to the dynamical system 
$(\Omega\times X,\mathcal{B}(\Omega\times X),\mu ,S)$ with the specific observation $f$ defined by
\begin{eqnarray*}
f\,\,:\Omega\times X&\longrightarrow& X\\
(\omega,x)&\longmapsto& x.
\end{eqnarray*}
Indeed, with this observation, the hitting time for the observation and the hitting time for the 
random dynamical system are equal:
\[\tau^f_{B(f(x_0,\omega_0),r)}(\omega,x)=\tau^\omega_{B(x_0,r)}(x).\]
The complete proof of the Theorem will be done in Section~\ref{proofrds}.

\begin{remark}
One can observe that this result is complementary to the ones proved in \cite{RSV}. Indeed, in our result only the point $x_0$ is fixed and we proved an exponential law with respect to the invariant measure $\mu$. 

In \cite{RSV}, both the target $x_0$ and the alea $\omega$ are fixed and the exponential law is proved with respect to the sample measures $\mu_\omega$ and also with respect to the marginal $\nu$.

We emphasize that in \cite{aytacFV} also exponential law with respect to the invariant measure $\mu$ is obtained for randomly perturbed dynamical systems.

\end{remark}

For i.i.d. random dynamical systems, to obtain an exponential law, 
we just need to assume a super-polynomial decay of correlations for the random dynamical system, 
i.e. our observables are from $X$ to $\mathbb{R}$, which is a more natural assumption than hypothesis \eqref{decayskewrds}.

More precisely, let $\{T_{\lambda}\}_{\lambda\in\Lambda}$ be a family of transformations 
defined on a compact Riemannian manifold $X$ and let $\mathcal{P}$ be a probability measure on a metric space
$\Lambda$.  We will consider $\mathcal{T}$ a random dynamical system on $X$ over 
$(\Lambda^\N,\mathcal{P}^\N,\sigma)$ with a stationary measure $\nu$, where $\sigma$ is the shift. That is, for an \textit{i.i.d.} stochastic process $\underline{\lambda}=(\lambda_n)_{n\geq 1}\in\Lambda^{\mathbb{N}}$ with common 
distribution $\mathcal{P}$, a random evolution of an initial state $x\in X$  will be:
$$
T^n_{\underline{\lambda}} x=T_{\lambda_{n}}\circ\ldots T_{\lambda_1}x
$$
for every $n\geq 0$.

\begin{definition}
The i.i.d. random dynamical system has a super-polynomial decay of correlations if, for all $n\in\mathbb{N}^{*}$, $\psi$ Lipschitz observables from $X$ to $\mathbb{R}$ and $\varphi$ measurable bounded from $ X$ to $\mathbb{R}$
$$
\vert\int_{\Lambda^\N\times X}\psi(x)\varphi(T^n_{\underline{\lambda}} x)d\mathcal{P}^\N d\nu-\int_X\psi d\nu \int_{X} \varphi d\nu\vert\leq \|\psi\|_{Lip}.\|\varphi\|_\infty .\theta_n
$$
with $\lim_{n\rightarrow\infty}\theta_nn^p=0$ for any $p>0$.
\end{definition}
\begin{theorem}\label{theorandomhitiid}
Let $\mathcal{T}$ be an i.i.d. random dynamical system on $X$ over $(\Lambda^\N,\mathcal{P}^\N,\sigma)$
with a stationary measure $\nu$. If the random dynamical system is random-aperiodic, satisfied 
hypothesis \eqref{hypcourrds} and has a super-polynomial decay of correlations then for every $t\geq0$ and for $\nu$-almost 
every $x_0\in X$ such that $\underline{d}_{\nu}(x_0)>0$:
\[\lim_{r\rightarrow0}\mathcal{P}^\N\otimes\nu\left((\underline{\lambda},x)\in \Lambda^\N\times X, \tau_{r}^{\underline{\lambda}}(x,x_0)>\frac{t}{\nu\left(B\left(x_0,r\right)\right)}\right)=e^{-t}\]
and
\[\lim_{r\rightarrow0}\mathcal{P}^\N\otimes\nu_{\Lambda^\N\times B\left(x_0,r\right)}\left((\underline{\lambda},x)\in \Lambda^\N\times X, \tau_{r}^{\underline{\lambda}}(x,x_0)>\frac{t}{\nu\left(B\left(x_0,r\right)\right)}\right)=e^{-t}.\]
\end{theorem}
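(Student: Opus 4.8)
The plan is to obtain Theorem~\ref{theorandomhitiid} as a consequence of Theorem~\ref{theorandomhit}. Since $\nu$ is a stationary measure, the product $\mu:=\mathcal{P}^\N\otimes\nu$ is $S$-invariant with $\pi_*\mu=\mathcal{P}^\N$, so $\mu$ is an invariant measure for the random dynamical system in the sense of Section~\ref{applications}; its marginal on $X$ is $\nu$, and $\mu_{\Omega\times B(x_0,r)}=\mathcal{P}^\N\otimes\nu_{B(x_0,r)}$. With this $\mu$, random-aperiodicity and hypothesis~\eqref{hypcourrds} are exactly the hypotheses of Theorem~\ref{theorandomhit} (hypothesis~\eqref{hypcourrds} involves only $\nu$, which is unchanged), and, since $\tau_{B(f(x_0,\omega_0),r)}^{f}(\omega,x)=\tau_r^{\omega}(x,x_0)$ and $\mu=\mathcal{P}^\N\otimes\nu$, the two limits produced by Theorem~\ref{theorandomhit} are precisely the two limits asserted in Theorem~\ref{theorandomhitiid}. (If one insists on the standing assumption $X\subset\R^N$, embed the compact Riemannian manifold $X$ into some Euclidean space; locally the Riemannian and Euclidean distances are bi-Lipschitz equivalent, so nothing in the statement depends on the embedding.) Hence the whole task reduces to verifying that the i.i.d.\ super-polynomial decay of correlations, which only controls correlations for observables $\varphi\colon X\to\R$, implies the skew-product decay~\eqref{decayskewrds}, in which $\varphi$ is allowed to depend on the random sample $\underline\lambda$ as well.

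To prove this implication, fix $n\in\N^*$, a Lipschitz $\psi\colon X\to\R$ and a measurable bounded $\varphi\colon\Lambda^\N\times X\to\R$. Identify $\Lambda^\N$ with $\Lambda^n\times\Lambda^\N$ via $\underline\lambda\mapsto\bigl((\lambda_1,\dots,\lambda_n),\sigma^n\underline\lambda\bigr)$, so that $\mathcal{P}^\N$ becomes $\mathcal{P}^{\otimes n}\otimes\mathcal{P}^\N$, the point $T^n_{\underline\lambda}x=T_{\lambda_n}\circ\cdots\circ T_{\lambda_1}x$ depends only on the first factor, and $S^n(\underline\lambda,x)=(\sigma^n\underline\lambda,T^n_{\underline\lambda}x)$. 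Set $\Phi(y):=\int_{\Lambda^\N}\varphi(\underline\lambda',y)\,d\mathcal{P}^\N(\underline\lambda')$; by Fubini $\Phi\colon X\to\R$ is measurable with $\|\Phi\|_\infty\le\|\varphi\|_\infty$, and $\int_X\Phi\,d\nu=\int\varphi\,d\mu$. Integrating out the tail coordinates $\underline\lambda'=\sigma^n\underline\lambda$ first (they are $\mathcal{P}^\N$-distributed and independent of $(\lambda_1,\dots,\lambda_n)$, hence of $T^n_{\underline\lambda}x$), one gets $\int\psi(x)\varphi(S^n(\underline\lambda,x))\,d\mu=\int_{\Lambda^\N\times X}\psi(x)\,\Phi(T^n_{\underline\lambda}x)\,d\mathcal{P}^\N\,d\nu$. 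Applying the i.i.d.\ decay of correlations to the pair $(\psi,\Phi)$ then yields $\bigl|\int\psi(x)\varphi(S^n(\underline\lambda,x))\,d\mu-\int_X\psi\,d\nu\int\varphi\,d\mu\bigr|\le\|\psi\|_{Lip}\|\varphi\|_\infty\theta_n$, which is exactly~\eqref{decayskewrds} with the same rate $\theta_n$.

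I do not expect a genuine obstacle: this is the standard reduction in which one averages over the ``fresh'' randomness $\lambda_{n+1},\lambda_{n+2},\dots$ that the skew product has not yet used at time $n$. The only points needing care are the joint measurability and boundedness of $\varphi$ — required so that $\Phi$ is a legitimate observable for the i.i.d.\ decay hypothesis, which is precisely why Theorem~\ref{theoprfo} and Theorem~\ref{thprinc} were stated allowing the second observable to be merely measurable and bounded rather than Lipschitz — together with the Fubini bookkeeping underlying the independence step. Once~\eqref{decayskewrds} is in hand, Theorem~\ref{theorandomhit} applies verbatim and the proof is complete.
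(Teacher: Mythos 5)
Your proposal is correct and follows essentially the same route as the paper: the key step in both is to use Fubini and the independence of $(\lambda_1,\dots,\lambda_n)$ from $\sigma^n\underline\lambda$ to replace the $\underline\lambda$-dependent observable by its average over the tail coordinates, reducing the skew-product correlation estimate to the i.i.d.\ decay hypothesis applied to a function of $x$ alone. The only (harmless) difference is one of packaging: you prove the full implication that i.i.d.\ decay implies hypothesis~\eqref{decayskewrds} and then invoke Theorem~\ref{theorandomhit} as a black box, whereas the paper re-enters the proof and verifies only the single instance of~\eqref{eqdecay} needed there, for the specific observable $1_{B_{n-g}}$.
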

\begin{remark}
We emphasize that this result extends the result of \cite{aytacFV} for randomly perturbed dynamical systems. The principal generalization lies in the decay of correlations.

First of all, they need polynomial decay of correlations against $L^1$ observables when here we just need super-polynomial decay of correlations against $L^\infty$ observables. Besides, for the observables $\psi$, we do not assume that indicator functions of balls are bounded in the Banach space.

Moreover, they study randomly perturbed dynamical systems, more precisely, they perturbed an original map with random additive noise when in our setting we can study more general random dynamical systems, as shown in the following examples.
\end{remark}

In the next subsections, we will give examples of random dynamical systems where we can apply our results. The first example was given in \cite{MR2773129} as an example of non-i.i.d. random dynamical systems where the recurrence rates can be computed.

\subsection{Non-i.i.d. random expanding maps}

Let  $T_1$ and $T_2$ be the two following maps defined on the one-dimensional torus $X=\T^1$:
\[\begin{array}{rcccrcc}
T_1:X&\longrightarrow& X & \textrm{ and }&T_2:X&\longrightarrow& X\\
x&\longmapsto& 2x& & x&\longmapsto& 3x.
\end{array}\]
The dynamic of the random dynamical system is given by the following skew product
\begin{eqnarray*}
S:\Omega\times X&\longrightarrow& \Omega\times X\\
(\omega,x)&\longmapsto&(\vartheta(\omega),T_\omega x)
\end{eqnarray*}
with $\Omega=[0,1]$, $T_\omega=T_1$ if $\omega\in[0,2/5)$, $T_\omega=T_2$ if $\omega\in[2/5,1]$ and where $\vartheta$ is the following piecewise linear map
\[\vartheta(\omega)=\left\{\begin{array}{lll}
2\omega\qquad&\textrm{if}&\omega\in[0,1/5)\\
3\omega-1/5\qquad&\textrm{if}&\omega\in[1/5,2/5)\\
2\omega-4/5\qquad&\textrm{if}&\omega\in[2/5,3/5)\\
3\omega/2-1/2 \qquad&\textrm{if}&\omega\in[3/5,1].\\
\end{array}\right.\]
One can observe that the random orbit is constructed by choosing the map $T_1$ and $T_2$ following a Markov process with the stochastic matrix
\[A=\begin{pmatrix}1/2&1/2\\1/3&2/3\end{pmatrix}.\]
It was proved in \cite{MR2773129} that the associated skew-product is $Leb\otimes Leb$-invariant, is random aperiodic and has an exponential decay of correlations. 

We can verify that Lebesgue measure satisfied hypothesis \eqref{hypcourrds} and thus Theorem~\ref{theorandomhit} applies, i.e. for every $t\geq0$ and for $Leb$-almost 
every $x_0\in \T^1$:
\[\lim_{r\rightarrow0}Leb\otimes Leb\left( \tau_{r}^\omega(x,x_0)>\frac{t}{r}\right)=e^{-t}\]
and
\[\lim_{r\rightarrow0}Leb\otimes Leb_{[0,1]\times B\left(x_0,r\right)}\left( \tau_{r}^\omega(x,x_0)>\frac{t}{r}\right)=e^{-t}.\]

\subsection{Random circle maps expanding in average}

  Let $\Lambda$ be a metric space with a probability measure $\mathcal{P}$. For every $\lambda\in\Lambda$, let $T_\lambda:\T^1\rightarrow\T^1$ be an application $C^2$ without critical point.
  We will assume that 
  \[\int_\Omega\frac{1}{\inf \vert T_\lambda'\vert}d\mathcal{P}(\lambda)<1\]
  and that
  \[\int_\Omega\left\Vert\frac{T_\lambda''}{( T_\lambda')^2}\right\Vert_\infty d\mathcal{P}(\lambda)<+\infty.\]
  It has been proved (e.g. \cite{stenlundsulku} and references therein), that for the i.i.d. random dynamical system on $\T^1$ over 
$(\Lambda^\N,\mathcal{P}^\N,\sigma)$ there exists an absolutely continuous stationary measure
  and that the random dynamical system has an exponential decay of correlations for H\"older observables
  (nevertheless one can use for example \cite{GRS} to go from H\"older to Lipschitz observables).
  Since absolutely continuous invariant measures satisfy hypothesis~\eqref{hypcourrds}, we obtain by Theorem~\ref{theorandomhitiid} that if the system is 
  random aperiodic, we have an exponential law for the return time and the hitting time. 
  
  For example, one can apply this results for random $\beta$-transformations. More precisely,  for every $\lambda\in\Lambda$, let $T_\lambda:\T^1\rightarrow\T^1$
  such that $T_\lambda x=\beta_\lambda x$ where $\beta_\lambda>1$. To prove, that this system has an exponential law, we just need
  to prove that it is random aperiodic.
  For any $\underline{\lambda}\in\Lambda^\N$ and any $n\in\N$, $T^n_{\underline{\lambda}}$ is a $\beta$-transformation, thus
   \[\textrm{Card} \{x\in\mathbb{T}^1\,:\,T^n_{\underline{\lambda}}x=x\}<+\infty\]
and so for every $\underline{\lambda}\in\Lambda^\N$
\[Leb(\{x\in\mathbb{T}^1\,:\,\exists n\in\mathbb{N}^*\,,\,T^n_{\underline{\lambda}}x=x\})=0.\]
Then, since the stationary measure is absolutely continuous, we obtain that
\begin{equation*}
\mathcal{P}^\N\otimes \nu(\{(\underline{\lambda},x)\in\Lambda^\N\times\mathbb{T}^1\,:\,\exists n\in\N^*\,,\, T^n_{\underline{\lambda}}x=x\})=0
\end{equation*}
  and therefore, one can apply Theorem~\ref{theorandomhitiid}.
  
\subsection{Randomly perturbed dynamical systems}
In this section, we just give a short overview of some randomly perturbed dynamical systems (see e.g. \cite{MR1233850,zbMATH01791970}) for which our results apply, obtaining a generalization of \cite{aytacFV}.

Let $X$ be a compact Riemannian manifold and let $(X,T,\mu)$ be a deterministic dynamical system. We build our random dynamical systems by perturbing our transformation $T$ with random additive noise. More precisely,  for $\varepsilon>0$, let $\Lambda_\varepsilon$ be a metric space (typically $\Lambda_\varepsilon=B(0,\eps)$) and $\mathcal{P}_\eps$ a probability measure on $\Lambda_\varepsilon$. The family of transformations $\{T_{\lambda}\}_{\lambda\in\Lambda_\eps}$, where $T_\lambda:X\rightarrow X$, are defined by
\[T_\lambda(x)=T(x)+\lambda.\]
We consider $\mathcal{T}$ the i.i.d random dynamical system on $X$ over 
$(\Lambda_\eps^\N,\mathcal{P}_\eps^\N,\sigma)$. 

For $X=\T^d$, it has been proved (e.g. \cite{MR1233850,vi, aytacFV}) that for some expanding and piecewise expanding maps, if $\eps$ is small enough, the random dynamical system admits a stationary measure $\nu_\eps$ absolutely continuous with respect to the Lebesgue measure (thus satisfied hypothesis \eqref{hypcourrds}) and that the system has a super-polynomial decay of correlations.

Moreover, since these systems are random aperiodic \cite{MR2773129}, one can apply Theorem~\ref{theorandomhitiid} and obtain an exponential law.

This gives, for example, an exponential law for perturbation of expanding and piecewise expanding maps of the circle with a finite number of discontinuities (see \cite{vi} or \cite{aytacFV} for precise definitions) and also an exponential law for perturbations of expanding and piecewise expanding maps of $\T^d$ (e.g. \cite{aytacFV,zbMATH01445781}).

\section{Proof of the exponential law for observations of dynamical systems}\label{sectionproofobservation}
This section is dedicated to the proof of Theorem~\ref{thprinc}.

The proof of Theorem~\ref{thprinc} follows the ideas of \cite{MR1736991,MR2568049}. For a measurable subset $A\subset\R^N$ such that $\pf(A)>0$, we define
\[\delta(A)=\sup_{k\in\N}\left|\mu\left(\tau_A^f>k\right)-\mu_{f^{-1}A}\left(\tau_A^f>k\right)\right|.\]
\begin{lemma}
Let $A\subset\R^N$ such that $\pf(A)>0$. For any $n\in\N$, we have
\[\left|\mu\left(\tau_A^f>n\right)-\left(1-\pf(A)\right)^n\right|\leq\delta(A).\]
\end{lemma}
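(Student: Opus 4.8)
The plan is to establish the estimate by induction on $n$, using the defining property of $\delta(A)$ to control the error introduced at each step. The base case $n=0$ is immediate since both $\mu(\tau_A^f>0)$ and $(1-\pf(A))^0$ equal $1$ (the infimum defining $\tau_A^f$ is over $\N^*$, so $\tau_A^f\geq 1$ always, hence $\tau_A^f>0$ surely).

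For the inductive step, the key identity is a one-step decomposition of the event $\{\tau_A^f>n\}$. First I would observe that
\[
\mu\left(\tau_A^f>n\right)=\mu\left(f(Tx)\notin A,\ \tau_A^f\circ T>n-1\right)
=\mu\left(\tau_A^f>n-1\right)-\mu\left(f(Tx)\in A,\ \tau_A^f\circ T>n-1\right),
\]
where I am using $T$-invariance of $\mu$ to identify $\{f(T^{k}x)\notin A \text{ for } 1\le k\le n\}$ with $\{f(Tx)\notin A\}\cap T^{-1}\{\tau_A^f>n-1\}$. The second term on the right is, by invariance again, equal to $\mu(f^{-1}A)\cdot\mu_{f^{-1}A}(\tau_A^f>n-1)=\pf(A)\,\mu_{f^{-1}A}(\tau_A^f>n-1)$. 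By the definition of $\delta(A)$ this differs from $\pf(A)\,\mu(\tau_A^f>n-1)$ by at most $\pf(A)\,\delta(A)\le\delta(A)$. Hence
\[
\mu\left(\tau_A^f>n\right)=\left(1-\pf(A)\right)\mu\left(\tau_A^f>n-1\right)+E_n,\qquad |E_n|\le\delta(A).
\]

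Now I would combine this recursion with the inductive hypothesis $|\mu(\tau_A^f>n-1)-(1-\pf(A))^{n-1}|\le\delta(A)$. Writing $p=\pf(A)$, we get
\[
\left|\mu\left(\tau_A^f>n\right)-(1-p)^n\right|
\le (1-p)\left|\mu\left(\tau_A^f>n-1\right)-(1-p)^{n-1}\right|+|E_n|
\le (1-p)\delta(A)+\delta(A).
\]
This gives a bound of $(2-p)\delta(A)$, which is worse than what we want. To fix this I would instead iterate the exact recursion all the way down: unrolling $\mu(\tau_A^f>n)=(1-p)^n\mu(\tau_A^f>0)+\sum_{j=1}^{n}(1-p)^{n-j}E_j$ and using $\mu(\tau_A^f>0)=1$ together with $|E_j|\le p\,\delta(A)$ (keeping the factor $p$ rather than bounding it by $1$) yields
\[
\left|\mu\left(\tau_A^f>n\right)-(1-p)^n\right|\le p\,\delta(A)\sum_{j=0}^{n-1}(1-p)^{j}\le p\,\delta(A)\cdot\frac1p=\delta(A),
\]
as desired. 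The main subtlety — and the step I expect to be the real content rather than bookkeeping — is getting the constant to be exactly $\delta(A)$ rather than a multiple of it; this forces one to keep the $\pf(A)$ factor in each error term $E_j$ and sum the resulting geometric series, rather than applying the triangle inequality too crudely or arguing by a naive induction. The underlying measure-theoretic facts ($T$-invariance of $\mu$ and the rewriting of the conditional measure) are routine.
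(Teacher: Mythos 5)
Your proof is correct. The paper disposes of this lemma in one line by noting $\tau_A^f=\tau_{f^{-1}A}$ and citing Lemma 41 of Saussol's review applied to the set $f^{-1}A$; your one-step decomposition, the recursion $\mu(\tau_A^f>n)=(1-\pf(A))\,\mu(\tau_A^f>n-1)+E_n$ with $|E_n|\le \pf(A)\,\delta(A)$, and the geometric-series summation are precisely the standard proof of that cited lemma, so you have simply made the citation self-contained rather than taken a different route.
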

\begin{proof}Using the fact that $\tau_A^f(x)=\tau_{f^{-1}A}(x):=\inf\{k>0, T^kx\in f^{-1}A\}$, one can apply Lemma 41 of \cite{MR2568049} to the set $f^{-1}A$.
\end{proof}
To prove the exponential law for the hitting and return time, we will need to estimate the decay of $\delta$:
\begin{lemma}\label{lemlimdelta}
Under the assumption of Theorem~\ref{thprinc}, we have
\[\lim_{r\rightarrow0}\delta\left(B\left(f(x_0),r\right)\right)=0\]
for $\mu$-almost every $x_0$ such that $\underline{d}_{f_*\mu}(f(x_0))>0$.
\end{lemma}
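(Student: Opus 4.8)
The plan is to bound $\delta(B(f(x_0),r))$ by a sum of two terms: a "short-time" contribution controlled by the decay of correlations, and a "long-time" contribution controlled by the fact that the hitting time is typically of order $1/f_*\mu(B(f(x_0),r))$. Fix $x_0$ with $d := \underline{d}_{f_*\mu}(f(x_0)) > 0$; write $B_r = B(f(x_0),r)$, $A_r = f^{-1}B_r$, $m_r = f_*\mu(B_r) = \mu(A_r)$, and recall $\tau_{A_r}^f = \tau_{A_r}$ as a return time for $(X,T,\mu)$ into $A_r$. For $x \notin A_r$ one has $\tau_{A_r}(x) > k$ iff $\mathbf{1}_{A_r}\circ T^j(x) = 0$ for $j=1,\dots,k$, and the standard Kac-type manipulation (as in \cite{MR1736991,MR2568049}) gives, for any cutoff $g \in \N$,
\[
\mu(\tau_{A_r} > k) - \mu_{A_r}(\tau_{A_r} > k)
\]
can be rewritten, after telescoping in $k$, in terms of correlation-type quantities $\big|\int_X \phi_{g}\cdot \mathbf{1}_{A_r}\circ T^{n}\,d\mu - \mu(A_r)\int_X \phi_g\,d\mu\big|$ where $\phi_g = \mathbf{1}_{\{\tau_{A_r}>g\}}$ is measurable and bounded by $1$, plus an error of size $\lesssim g\, m_r + \mu(\tau_{A_r} \le g)$. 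The point is that the indicator of the ball is not Lipschitz, so I cannot apply Definition \ref{defdecay} directly to $\mathbf{1}_{B_r}$; instead I approximate $\mathbf{1}_{B_r}\circ f$ from inside and outside by Lipschitz functions $\psi^{\pm}_r$ with $\psi^-_r \le \mathbf{1}_{B_r}\circ f \le \psi^+_r$, supported in $B_{r+\rho}$ resp. $B_{r-\rho}$, with $\|\psi^{\pm}_r\|_{\Lip} \lesssim \|f\|_{\Lip}/\rho$. Applying the decay of correlations to $\psi^{\pm}_r$ and $\phi_g\circ T^{\ell}$ (here I use the version with bounded measurable second argument, noted after Theorem \ref{theoprfo}), the short-time part is bounded by
\[
\frac{C\,g\,\|f\|_{\Lip}}{\rho}\,\theta_n + f_*\mu(B_{r+\rho}\setminus B_{r-\rho}),
\]
and hypothesis \eqref{hypcouronne} gives $f_*\mu(B_{r+\rho}\setminus B_{r-\rho}) \le (r+\rho)^{-b}\rho^a$ (or a similar expression), which is small if $\rho$ is a suitable power of $r$.

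Next I handle the long-time part, i.e. $\mu(\tau_{A_r} > g)$ and $\mu_{A_r}(\tau_{A_r} > g)$, which must be shown to be small once $g = g(r)$ is chosen appropriately. For this I invoke Theorem \ref{theoprfo}: since $\underline{R}^f(x_0) = \underline{d}_{f_*\mu}(f(x_0)) = d > 0$ for $\mu$-a.e. such $x_0$, the hitting time satisfies $\tau_r^f(x,x_0) \le r^{-(d+\eps)}$ eventually, and more precisely the hitting time $\tau_{A_r}$ is, with large $\mu$-probability, at most a moderate power of $1/m_r$; quantitatively, by Markov's inequality applied to $\mu(\tau_{A_r} > g) \le \mu(\tau_{A_r}>g)$ and a covering/Kac argument, $\mu(\tau_{A_r}>g)$ is small when $g\,m_r \to \infty$. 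The cleanest route is: since $\lim_{r\to0}\delta(B_r)$ is what we want, pick $g = g(r) \to \infty$ with $g(r)\,m_r \to 0$ slowly enough that the correlation term $g(r)\theta_{n}/\rho$ still vanishes along an appropriate choice $n = n(r)$, using that $\theta_n$ decays super-polynomially while $m_r \ge r^{d+\eps}$ and $\rho$ is polynomial in $r$; then both $g\,m_r$ and the correlation error tend to $0$, and $\mu(\tau_{A_r}\le g) \le g\,m_r \to 0$ controls the remaining error. Putting the pieces together yields $\delta(B_r) \to 0$.

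The main obstacle is the bookkeeping of the three competing scales — the Lipschitz-approximation scale $\rho$, the time cutoff $g$, and the correlation index $n$ — so that all error terms vanish simultaneously; this is where hypothesis \eqref{hypcouronne} (annular estimate) and the positivity $\underline{d}_{f_*\mu}(f(x_0)) > 0$ (giving a polynomial lower bound $m_r \gtrsim r^{d+\eps}$) are essential, since they force $\rho$ and $g$ to be polynomial/sub-polynomial in $r$ while super-polynomial decay of $\theta_n$ does the rest. A secondary technical point is justifying the telescoping identity for $\mu(\tau_{A_r}>k) - \mu_{A_r}(\tau_{A_r}>k)$ in terms of correlations with $\mathbf{1}_{A_r}$; this follows the computation in Lemma 41–42 of \cite{MR2568049} applied to $f^{-1}B_r$ essentially verbatim, replacing the appeal to decay of correlations against $\mathbf{1}_{A_r}$ by the Lipschitz sandwiching described above.
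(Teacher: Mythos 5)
Your overall scheme --- the telescoping/five-term decomposition over a cutoff time $g$, the Lipschitz sandwiching of $\mathbf{1}_{B_r}\circ f$ at a scale $\rho$ with Lipschitz norm $\lesssim \|f\|_{\Lip}/\rho$, hypothesis \eqref{hypcouronne} for the annulus, super-polynomial decay of $\theta_n$ tested against the bounded observable $\mathbf{1}_{\{\tau>n-g\}}$, and a choice of $\rho$ and $g$ that are (sub)polynomial in $r$ --- is the same as the paper's. But there is a genuine gap in how you close the argument. Writing $B_r=B(f(x_0),r)$ and $m_r=f_*\mu(B_r)$, the decomposition unavoidably produces the error term $\mu_{f^{-1}B_r}\bigl(\tau^f_{B_r}\le g\bigr)$, the probability of a \emph{short return} to the ball \emph{conditioned on starting in its preimage} (it comes from replacing $\{\tau^f_{B_r}>n\}$ by $T^{-g}\{\tau^f_{B_r}>n-g\}$ inside the intersection with $f^{-1}B_r$). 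Your proposal only controls the unconditional quantity $\mu(\tau^f_{B_r}\le g)\le g\,m_r$; after dividing by $m_r$ this gives the useless bound $g$. This conditional short-return term is precisely the hard part of the lemma: the paper isolates it as Lemma~\ref{lemshort} and proves it by (i) using Theorem~\ref{theoprfo} in the direction opposite to the one you invoke --- namely the lower bound $\underline R^f(x)\ge\underline d_{f_*\mu}(f(x))>a$, so that for a.e.\ $x$ one has $\tau^f_{2r}(x)\ge r^{-a/2}$ for all small $r$, and a short return of $x\in f^{-1}B_r$ into $f^{-1}B_r$ would force the rare event $f(T^kx)\in B(f(x),2r)$ with $k<r^{-a/2}$ --- and (ii) transferring this almost-everywhere statement to a statement conditional on $f(x)\in B_r$ via the disintegration $\E_\mu(\,\cdot\mid f=y)$ and a Lebesgue density point argument for the sets $D_\eps(r_0)=\{y:\mu(A(r_0,y))\le\eps\}$ with respect to $f_*\mu$. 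Without some version of this step the proof does not go through.

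Two further inaccuracies, one cosmetic and one substantive. You first ask for $g\,m_r\to\infty$ so that $\mu(\tau>g)$ is small; that quantity is not an error term in this decomposition (all the errors involve $\{\tau\le g\}$), and you later correctly require $g\,m_r\to0$ --- the two demands are contradictory as written, and only the second is relevant. More importantly, the lower bound on $m_r$ needed to absorb $\theta_g/\rho$ and the annulus term cannot be $m_r\gtrsim r^{d+\eps}$ as you claim: positivity of the \emph{lower} local dimension yields the \emph{upper} bound $m_r\le r^{d-\eps}$, and when $\underline d<\overline d$ your inequality fails along a subsequence. The paper instead uses that $\overline d_{f_*\mu}\le N$ almost everywhere on $\R^N$, whence $m_r\ge r^{N+1}$ for $r$ small; with that replacement (and the choice $\rho$ decaying like a power of $\theta_g$, or any suitable power of $r$) the scale bookkeeping does work out as you describe.
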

\begin{proof}
Let $x_0\in X$, $r>0$, $0<\rho<r$ and $n\in\N$. Let $A=B(f(x_0),r)$, $B_n=\{\tau_A^f>n\}$ and $g\geq n$. Let $\phi:X\rightarrow \R$ a function to be defined later. We have
\begin{eqnarray*}
\left|\mu\left(f^{-1}A\cap B_n\right)-\mu(f^{-1}A)\mu(B_n)\right|&\leq & \left|\mu\left(f^{-1}A\cap B_n\right)-\mu\left(f^{-1}A\cap T^{-g}B_{n-g}\right)\right|\\
& +&\left|\mu\left(f^{-1}A\cap T^{-g}B_{n-g}\right)-\int\phi.1_{B_{n-g}}\circ T^g d\mu\right|\\
&+&\left|\int\phi.1_{B_{n-g}}\circ T^g d\mu-\mu(B_{n-g})\int\phi d\mu\right|\\
&+&\left|\mu(B_{n-g})\int\phi d\mu-\mu(B_{n-g})\mu(f^{-1}A)\right|\\
&+&\left|\mu(B_{n-g})\mu(f^{-1}A)-\mu(f^{-1}A)\mu(B_n)\right|.
\end{eqnarray*}
Let us estimate each term of this inequality. First of all, we use the definition of $B_n$ to estimate the first term:
\begin{eqnarray*}
\left|\mu\left(f^{-1}A\cap B_n\right)-\mu\left(f^{-1}A\cap T^{-g}B_{n-g}\right)\right|&=&\mu\left(f^{-1}A\cap T^{-g}B_{n-g}\cap\{\tau_A^f\leq g\}\right)\\
&\leq& \mu\left(f^{-1}A\cap\{\tau_A^f\leq g\}\right).
\end{eqnarray*}
Let $C=B(f(x_0),r-\rho)$ and define $\phi(x):=\max\left(0, 1-\frac{1}{\rho}d(f(x),C)\right)$. One can observe that $1_{f^{-1}C}\leq \phi\leq 1_{f^{-1}A}$ and that $\phi$ is $\frac{K}{\rho}$-Lipschitz where $K=\max(1, |f|_{Lip})$, thus
\begin{eqnarray*}
\left|\mu\left(f^{-1}A\cap T^{-g}B_{n-g}\right)-\int\phi.1_{B_{n-g}}\circ T^g d\mu\right|&=&\left|\int1_{f^{-1}A}.1_{B_{n-g}}\circ T^g d\mu-\int\phi.1_{B_{n-g}}\circ T^g d\mu\right|\\
&\leq&\int\left|1_{f^{-1}A}-\phi \right|d\mu\\
&\leq&\int1_{f^{-1}A}-1_{f^{-1}C} d\mu=\mu\left(f^{-1}A\backslash f^{-1}C\right)\\
&\leq&\pf\left(A\backslash C\right).
\end{eqnarray*}
The information on the decay of correlation gives us the estimate of the third term:
\begin{eqnarray}
\left|\int\phi.1_{B_{n-g}}\circ T^g d\mu-\mu(B_{n-g})\int\phi d\mu\right|&\leq&\|\phi\|_{Lip}\|1_{B_{n-g}}\|_{\infty}.\theta_g \nonumber\\
&\leq& \frac{K}{\rho}\theta_g.\label{eqdecay}
\end{eqnarray}
The estimate of the fourth term is given using the same idea of the second term:
\begin{eqnarray*}
\left|\mu(B_{n-g})\int\phi d\mu-\mu(B_{n-g})\mu\left(f^{-1}A\right)\right|&\leq& \left|\int\phi-1_{f^{-1}A}d\mu \right|\\
&\leq&\pf\left(A\backslash C\right).
\end{eqnarray*}
Finally, the last term can be estimate using the invariance of the measure and the definition of $B_n$:
\begin{eqnarray*}
\left|\mu(B_{n-g})\mu(f^{-1}A)-\mu(f^{-1}A)\mu(B_n)\right|&=&\mu(f^{-1}A)\left(\mu\left(T^{-g}B_{n-g}\right)-\mu(B_n)\right)\\
&\leq&\mu(f^{-1}A)\mu\left(\tau_A^f\leq g\right).
\end{eqnarray*}
These estimates give us
\begin{equation}\label{eqborne}
\left|\mu_{f^{-1}A}(B_n)-\mu(B_n)\right|\leq\mu_{f^{-1}A}\left(\tau_A^f\leq g\right)+2\frac{\pf(A\backslash C)}{\pf(A)}+\frac{K}{\rho}\frac{\theta_g}{\pf(A)}+\mu\left(\tau_A^f\leq g\right).
\end{equation}
One can observe that this inequality is still satisfied if $n\leq g$. Indeed, when $n\leq g$, we have 
\begin{eqnarray*}
\left|\mu_{f^{-1}A}(B_n)-\mu(B_n)\right|&\leq&\left|1-\mu_{f^{-1}A}(B_n)\right|+\left|1-\mu(B_n)\right|\\
&\leq&\mu_{f^{-1}A}\left(\tau_A^f<n\right)+\mu\left(\tau_A^f<n\right)\\
&\leq&\mu_{f^{-1}A}\left(\tau_A^f\leq g\right)+\mu\left(\tau_A^f\leq g\right).
\end{eqnarray*}
Then, \eqref{eqborne} holds for every $n\in\N$ and gives us an upper bound for $\delta(B(f(x_0),r))$:
\begin{eqnarray}
\delta(B(f(x_0),r))&\leq&\mu_{f^{-1}B(f(x_0),r)}\left(\tau_{B(f(x_0),r)}^f\leq g\right)+2\frac{\pf({B(f(x_0),r)}\backslash {B(f(x_0),r-\rho)})}{\pf({B(f(x_0),r)})}\nonumber\\
&+&\frac{K}{\rho}\frac{\theta_g}{\pf({B(f(x_0),r)})}+\mu\left(\tau_{B(f(x_0),r)}^f\leq g\right).\label{eqbornedelta}
\end{eqnarray}
To prove that $\delta(B(f(x_0),r))\underset{r\rightarrow0}{\longrightarrow}0$ for $\mu$-almost every $x_0\in X$ such that $\underline{d}_{f_*\mu}(f(x_0))>0$ we need the two following lemmas:

\begin{lemma}\label{lemhit}
For every $x_0\in X$ such that $\underline{d}_{f_*\mu}(f(x_0))>0$, for any $d\in(0,\underline{d}_{f_*\mu}(f(x_0)))$, we have
\[\mu\left(\tau^f_r(x,x_0)\leq r^{-d}\right)\rightarrow0\qquad as\,\,r\rightarrow0.\]
\end{lemma}
\begin{proof}
One can observe that for any measurable subset $A\subset X$ and for any $n\in N^*$, we have
\begin{eqnarray*}
\mu(\tau_A\leq n)&=&\mu\left(T^{-1}A\cup T^{-2}A\cup\dots\cup T^{-n}A\right)\\
&\leq& \mu\left(T^{-1}A\right)+ \mu\left(T^{-2}A\right)+\dots+ \mu\left( T^{-n}A\right)\\
&\leq&n\mu(A).
\end{eqnarray*}
This implies that for every $x_0\in X$ such that $\underline{d}_{f_*\mu}(f(x_0))>0$ and for any $d\in(0,\underline{d}_{f_*\mu}(f(x_0)))$, we have
\begin{eqnarray*}
\mu\left(\tau^f_r(x,x_0)\leq r^{-d}\right)&=&\mu\left(\tau_{f^{-1}B(f(x_0),r)}\leq r^{-d}\right)\\
&\leq&r^{-d} \mu\left(f^{-1}B(f(x_0),r) \right)=r^{-d} f_*\mu\left(B(f(x_0),r) \right).
\end{eqnarray*}
Since $0<d<\underline{d}_{f_*\mu}(f(x_0))$, $r^{-d} f_*\mu\left(B(f(x_0),r)\right)\rightarrow0$ as $r\rightarrow0$ and the lemma is proved.
\end{proof}

\begin{lemma}\label{lemshort} Under the assumptions of Theorem~\ref{thprinc}, for $\mu$-almost every $x_0\in X$ such that $\underline{d}_{f_*\mu}(f(x_0))>0$, for any $d\in(0,\underline{d}_{f_*\mu}(f(x_0)))$, we have
\begin{equation}\label{eqlemshort}
\mu_{f^{-1}B(f(x_0),r)}\left(\tau^f_r(x,x_0)\leq r^{-d}\right)\rightarrow0\qquad as\,\,r\rightarrow0.
\end{equation}
\end{lemma}

\begin{proof}
For $a>0$, let us define $Y_a=\{y\in \R^N,\underline{d}_{f_*\mu}(y)>a\}$. 
One can observe that Theorem~\ref{theoprfo} gives us that
\[
\liminf_{r\to0}\frac{\log\tau_r^f(x)}{-\log r}\ge \underline{d}_{f_*\mu}(f(x))>a 
\]
for $\mu$-a.e. $x\in f^{-1}(Y_a)$.
Let $r_0>0$ and define for $y\in Y_a$
\[A(r_0,y)=\{x\in X\colon f(x)=y\textrm{ and } \exists r<r_0, \tau_{2r}^f(x)<r^{-a/2}\}.\]
Let $\varepsilon>0$ and set
\[
D_\varepsilon(r_0)=\{y\in Y_a\colon \mu(A(r_0,y))\le \varepsilon\}.
\]
Let $x_0\in X$ such that $f(x_0)$ is a Lebesgue density point of the set $D_\varepsilon(r_0)$ for the measure $f_*\mu$, i.e.
\[
\frac{f_*\mu(B(f(x_0),r)\cap D_\varepsilon(r_0))}{f_*\mu(B(f(x_0),r))}\to 1
\]
as $r\to0$. Hence there exists $r_1<r_0$ such that for any $r<r_1$
\[
f_*\mu(B(f(x_0),r)\cap D_\varepsilon(r_0)^c) \le \varepsilon f_*\mu(B(f(x_0),r)).
\]
Let $r<r_1$ and $d>a$. We get

\begin{eqnarray*}
& &\mu_{f^{-1}B(f(x_0),r)}\left(\tau^f_r(x,x_0)\leq r^{-d}\right)\\
&=&\frac{1}{f_*\mu(B(f(x_0),r))}\int_{X} 1_{f^{-1}B(f(x_0),r)}(x) 1_{\{\tau^f_{B(f(x_0),r)}\le r^{-d}\}}(x) d\mu(x)\\
&\le&\frac{1}{f_*\mu(B(f(x_0),r))}\int_{X} 1_{B(f(x_0),r)}( f(x)) 1_{\{\tau_{2r}^f<r^{-a/2}\}}(x) d\mu(x)\\
&=&\frac{1}{f_*\mu(B(f(x_0),r))}\int_{X} 1_{B(f(x_0),r)}( f(x)) \E_\mu\left(1_{\{\tau_{2r}^f<r^{-a/2}\}}\big\vert f\right) d\mu(x)\\
&=&\frac{1}{f_*\mu(B(f(x_0),r))}\int_{\R^N} 1_{B(f(x_0),r)}( y) \E_\mu\left(1_{\{\tau_{2r}^f<r^{-a/2}\}}\big\vert f=y\right) df_*\mu(y)\\
&=&\frac{1}{f_*\mu(B(f(x_0),r))}
\int_{\R^N}1_{B(f(x_0),r)}(y) \mu(A(r_0,y))df_*\mu(y)\\
&\le&\frac{1}{f_*\mu(B(f(x_0),r))}\left(
f_*\mu(B(f(x_0),r)\cap D_\varepsilon(r_0)^c)+\varepsilon f_*\mu(B(f(x_0),r)\cap D_\varepsilon(r_0))\right)\\
&\le &\frac{1}{f_*\mu(B(f(x_0),r))}2\varepsilon f_*\mu(B(f(x_0),r))=2\varepsilon.
\end{eqnarray*}

Since $\varepsilon$ is arbitrary and the measure of $D_\varepsilon(r_0)$ can be made arbitrarily close to the measure of $Y_a$, 
this shows that $\mu_{f^{-1}B(f(x_0),r)}\left(\tau^f_r(x,x_0)\leq r^{-d}\right)\to0$ for $\mu$-a.e. $x_0\in f^{-1}(Y_a)$ and for any $d>a$. The lemma is prove since $a$ can be chosen arbitrary small.
\end{proof}

We now have all the ingredients to finish the proof of the lemma. Let $x_0\in X$ such that $\underline{d}_{f_*\mu}(f(x_0))>0$, such that $\overline{d}_{f_*\mu}(f(x_0))\leq N$ and such that \eqref{eqcour} and \eqref{eqlemshort} are satisfied.
Since the upper local dimension of a measure is almost-everywhere smaller than the dimension of the ambient space, we obtain that $\mu\left(x\in X:\overline{d}_{f_*\mu}(f(x))\leq N\right)=1$.

Let $0<d<\underline{d}_{f_*\mu}(f(x_0))$. Let us choose $g=\lfloor r^{-d}\rfloor$ and $\rho=\theta_{g/2}$.

The choice of $x_0$ and $g$ and Lemma~\ref{lemshort} give us that
\begin{equation}\label{eqestd1}
\mu_{f^{-1}B(f(x_0),r)}\left(\tau_{B(f(x_0),r)}^f\leq g\right)\rightarrow0\qquad as\,\,r\rightarrow0
\end{equation}
and by Lemma~\ref{lemhit}, we have
\begin{equation}\label{eqestd2}
\mu\left(\tau^f_r(x,x_0)\leq g\right)\rightarrow0\qquad as\,\,r\rightarrow0.
\end{equation}
Using \eqref{eqcour}, we have that for $r$ sufficiently small
\[f_*\mu\left(B\left(f(x_0),r\right)\backslash B\left(f(x_0),r-\rho\right)\right)\leq r^{-b}\rho^a\]
and 
\begin{equation}\label{eqai}
\pf\left(B\left(f(x_0),r\right)\right)\geq r^{N+1},
\end{equation}
since $x_0$ satisfied $\overline{d}_{f_*\mu}(f(x_0))\leq N$, which implies that
\begin{equation}\label{eqestd3}
\frac{\pf({B(f(x_0),r)}\backslash {B(f(x_0),r-\rho)})}{\pf({B(f(x_0),r)})}\rightarrow0\qquad as\,\,r\rightarrow0.
\end{equation}
The choice of $g$ and $\rho$ together with~\eqref{eqai} gives us
\begin{equation}\label{eqestd4}
\frac{\theta_g}{\rho.\pf({B(f(x_0),r)})}\rightarrow0\qquad as\,\,r\rightarrow0.
\end{equation}
Finally, using hypothesis~\eqref{hypcouronne} and \eqref{eqbornedelta} together with ~\eqref{eqestd1}, \eqref{eqestd2},\eqref{eqestd3} and \eqref{eqestd4}, we obtain that $\delta(B(f(x_0),r))\rightarrow0$ as $r\rightarrow0$ for $\mu$-almost every $x_0\in X$ such that $\underline{d}_{f_*\mu}(f(x_0))>0$ which concludes the proof of the lemma.
\end{proof}
\begin{proof}[Proof of Theorem~\ref{thprinc}]
Let $t>0$. Let us denoted $n=\left\lfloor\frac{t}{\pf(B(f(x_0),r))}\right\rfloor$ and $A=B(f(x_0),r)$. We observe that
\begin{eqnarray*}
& &\left|\mu\left(\tau_{B(f(x_0),r)}^f>\frac{t}{f_*\mu\left(B\left(f(x_0),r\right)\right)}\right)-e^{-t}\right|\\
&=&\left|\mu\left(\tau_A^f>n\right)-\left(1-\pf(A)\right)^n+\left(1-\pf(A)\right)^n-e^{-t}\right|\\
&\leq& \delta(A)+\left|\left(1-\pf(A)\right)^n-e^{-t}\right|
\end{eqnarray*}
Moreover, we have 
\begin{equation}\label{eqexp}
\left|\left(1-\pf(A)\right)^n-e^{-t}\right|\leq \left|\left(1-\pf(A)\right)^n-\left(1-\frac{t}{n}\right)^n\right|+\left|\left(1-\frac{t}{n}\right)^n-e^{-t}\right|.
\end{equation}
Using the mean value theorem and the definition of $n$, we obtain that
\begin{eqnarray}
\left|\left(1-\pf(A)\right)^n-\left(1-\frac{t}{n}\right)^n\right|&\leq& n\left|\pf(A)-\frac{t}{n}\right|\nonumber\\
&\leq& \frac{t}{n}.\label{eqtvm}
\end{eqnarray}
Since it is well-known that $\left|\left(1-\frac{t}{n}\right)^n-e^{-t}\right|\rightarrow0$ as $n\rightarrow\infty$, \eqref{eqexp} together with \eqref{eqtvm} implies that 
\begin{equation}\label{eqdif}
\left|\left(1-\pf(A)\right)^n-e^{-t}\right|\rightarrow0\qquad\textrm{ as } n\rightarrow\infty.
\end{equation}
Finally the first part of the theorem is proved using \eqref{eqdif} and since by Lemma~\ref{lemlimdelta}, $\delta(A)\rightarrow0$ as $r\rightarrow0$ for $\mu$-almost every $x_0\in X$ such that $\underline{d}_{f_*\mu}(f(x_0))>0$.

To prove the second part of the theorem, we just need to observe that
\begin{eqnarray*}
\left|\mu_{f^{-1}A}\left(\tau_A^f>n\right)-e^{-t}\right|&\leq& \left|\mu_{f^{-1}A}\left(\tau_A^f>n\right)-\mu\left(\tau_A^f>n\right)\right|+\left|\mu\left(\tau_A^f>n\right)-e^{-t}\right|\\
&\leq&\delta (A)+\left|\mu\left(\tau_A^f>n\right)-e^{-t}\right|
\end{eqnarray*}
and Lemma~\ref{lemlimdelta} and the first part of the theorem give us that the right hand side of the inequality goes to zero as $r$ goes to zero.
\end{proof}

\section{Proof of the exponential law for random dynamical systems}\label{proofrds}

In this section, we will prove Theorem~\ref{theorandomhit} and Theorem~\ref{theorandomhitiid}.
\begin{proof}[Proof of Theorem~\ref{theorandomhit}]
This theorem is proved using Theorem~\ref{thprinc} applied to the dynamical system $(\Omega\times X,\mathcal{B}(\Omega\times X),\mu ,S)$ with the observation $f$ defined by
\begin{eqnarray*}
f\,\,:\Omega\times X&\longrightarrow& X\\
(\omega,x)&\longmapsto& x.
\end{eqnarray*}
With this observation, for all $(\omega,x)\in\Omega\times X$ and for all $r>0$, we can link the hitting time for the observation and the hitting time for the random dynamical system
\[\tau^f_{B(f(x_0,\omega_0),r)}(\omega,x)=\tau^\omega_{B(x_0,r)}(x),\]
we can identify the pushforward measure
\[f_*\mu=\nu,\]
and for the pointwise dimensions we can observe that
\[\underline{d}_{f_*\mu}(f(x_0,\omega_0))=\underline{d}_\nu(x_0)\qquad\textrm{and}\qquad \overline{d}^f_{f_*\mu}(f(x_0,\omega_0)=\overline{d}_\nu(x_0).\]
Moreover, the random dynamical system is random aperiodic if and only if the system is $\mu$-almost aperiodic for $f$.

Finally, in the proof of Theorem~\ref{thprinc}, one can observe that hypothesis \eqref{decayskewrds} is sufficient to prove \eqref{eqdecay} and thus the theorem is proved.
\end{proof}

\begin{proof}[Proof of Theorem~\ref{theorandomhitiid}]
One can see that the difference between Theorem~\ref{theorandomhitiid} and Theorem~\ref{theorandomhit} lies in the decay of correlations and that the decay of correlations is only used to obtain equation \eqref{eqdecay}. Thus, we will prove that equation \eqref{eqdecay} is still satisfied under the condition of Theorem~\ref{theorandomhitiid}. 

As observe in the proof of Theorem~\ref{theorandomhit}, the observable $\phi(\omega,x)$ does not depend on $\omega$ and can be substituted by an observable $\varphi(x)$. Then, using the setting of Theorem~\ref{theorandomhitiid}, we have
\begin{eqnarray*}
\int\phi.1_{B_{n-g}}\circ T^g d\mu&=&\int_X\varphi(x)\int_{\Lambda^\N}1_{\left\{\tau_{r}^{\sigma^g\underline{\lambda}}(T^g_{\underline{\lambda}}x,x_0)>n-g\right\}}d\mathcal{P}^\N d\nu\\
&=& \int_X\varphi(x)\int_{\Lambda^\N}\int_{\Lambda^\N}1_{\left\{\tau_{r}^{\widetilde{\underline{\lambda}}}(T^g_{\underline{\lambda}}x,x_0)>n-g\right\}}d\mathcal{P}^\N(\widetilde{\underline{\lambda}})d\mathcal{P}^\N(\underline{\lambda}) d\nu.
\end{eqnarray*}
Indeed, the fact that the $(\lambda_i)$ are chosen i.i.d. gives us
\begin{eqnarray*}
\int_{\Lambda^\N}1_{\left\{\tau_{r}^{\sigma^g\underline{\lambda}}(T^g_{\underline{\lambda}}x,x_0)>n-g\right\}}d\mathcal{P}^\N&=&\int_{\Lambda^n}1_{\left\{\tau_{r}^{\sigma^g\underline{\lambda}}(T^g_{\underline{\lambda}}x,x_0)>n-g\right\}}d\mathcal{P}^n\\
&=&\int_{\Lambda^n}1_{\left\{T^{g+1}_{\underline{\lambda}}x\notin B,\dots,T^n_{\underline{\lambda}}x\notin B \right\}}d\mathcal{P}^n\\
&=&\int_{\Lambda^n}1_{\left\{T_{\lambda_{g+1}}\circ T_{\lambda_g}\circ \dots \circ T_{\lambda_1}x\notin B,\dots,T_{\lambda_{n}}\circ\dots \circ T_{\lambda_1}x\notin B \right\}}d\mathcal{P}^n\\
&=&\int_{\Lambda^g}\int_{\Lambda^{n-g}}1_{\left\{T_{\tilde\lambda_1}\circ T^g_{\underline{\lambda}}x\notin B,\dots,T_{\tilde\lambda_{n-g}}\circ\dots\circ T_{\tilde\lambda_1}\circ T^g_{\underline{\lambda}}x\notin B \right\}}d\mathcal{P}^{n-g}(\widetilde{\underline{\lambda}})d\mathcal{P}^g(\underline{\lambda})\\
&=&\int_{\Lambda^g}\int_{\Lambda^{n-g}}1_{\left\{\tau_{r}^{\widetilde{\underline{\lambda}}}(T^g_{\underline{\lambda}}x,x_0)>n-g\right\}}d\mathcal{P}^{n-g}(\widetilde{\underline{\lambda}})d\mathcal{P}^g(\underline{\lambda})\\
&=&\int_{\Lambda^\N}\int_{\Lambda^\N}1_{\left\{\tau_{r}^{\widetilde{\underline{\lambda}}}(T^g_{\underline{\lambda}}x,x_0)>n-g\right\}}d\mathcal{P}^\N(\widetilde{\underline{\lambda}})d\mathcal{P}^\N(\underline{\lambda})
\end{eqnarray*}
where $B$ stands for $B(x_0,r)$.
Thus, we obtain that
\[\int\phi.1_{B_{n-g}}\circ T^g d\mu
= \int_X\varphi(x)\int_{\Lambda^\N}\psi(T^g_{\underline{\lambda}}x)d\mathcal{P}^\N(\underline{\lambda}) d\nu\]
where 
\[\psi(x)=\int_{\Lambda^\N}1_{\left\{\tau_{r}^{\widetilde{\underline{\lambda}}}(x,x_0)>n-g\right\}}d\mathcal{P}^\N(\widetilde{\underline{\lambda}}).\]
Since one can easily observe that
\[\mu(B_{n-g})=\int_X\psi(x)d\nu,\]
we can use the hypothesis on the decay of correlations for the random dynamical systems to obtain the equivalent of equation \eqref{eqdecay} in this setting
\begin{eqnarray*}
\left|\int\phi.1_{B_{n-g}}\circ T^g d\mu-\mu(B_{n-g})\int\phi d\mu\right|&=&\int_X\varphi(x)\int_{\Lambda^\N}\psi(T^g_{\underline{\lambda}}x)d\mathcal{P}^\N d\nu-\int_X\varphi d\nu\int_X\psi d\nu\\
&\leq&\|\varphi\|_{Lip}\|\psi\|_{\infty}.\theta_g \\
&\leq& \frac{K}{\rho}\theta_g
\end{eqnarray*}
and the theorem is proved as a corollary of Theorem~\ref{theorandomhit}.
\end{proof}

\section*{Acknowledgements} The author would like to thank B. Saussol for his useful help and comments. The author is grateful to the anonymous referees for their helpful comments.

\bibliographystyle{siam} 
\bibliography{biblio-expo}

\end{document}